\documentclass{article}

\usepackage{arxiv}

\usepackage[utf8]{inputenc} 
\usepackage[T1]{fontenc}    
\usepackage{hyperref}       
\usepackage{url}            
\usepackage{booktabs}       
\usepackage{nicefrac}       
\usepackage{microtype}      
\usepackage{xcolor}         

\usepackage{lipsum}		
\usepackage{graphicx}
\usepackage{natbib}
\usepackage{doi}

\usepackage{caption}
\usepackage{subcaption}
\usepackage{url}

\usepackage{math_commands}

\title{Dictionary learning for Kernel EDMD}
\author{Erik~Lien~Bolager~\(^{1}\)\hspace{0.4cm} Boumediene~Hamzi~\(^{2,3}\) \hspace{0.4cm} Houman~Owhadi~\(^{2}\) \hspace{0.4cm} 
Ioannis G. Kevrekidis~\(^{4}\)\\  
\textbf{Felix~Dietrich}~\(^{1}\)\thanks{Corresponding author, \texttt{felix.dietrich@tum.de}}\\
	\(^1\)School of Computation, Information and Technology,\\Munich Data Science Institute,  Munich Center for Machine Learning,\\
	Technical University of Munich, Germany \\
    \(^2\)Department of Computing and Mathematical Sciences, Caltech, USA\\
    \(^3\)The Alan Turing Institute, UK\\
    \(^4\) Departments of Chemical and Biomolecular Engineering\\ and of Applied Mathematics and Statistics, Johns Hopkins University, USA
} 
\date{}

\begin{document}

\maketitle

\begin{abstract}
Studying nonlinear dynamical systems through their state space behavior can be challenging, and one possible alternative is to analyze them via their associated Koopman operator. This turns the nonlinear problem into a linear, infinite-dimensional one. To approximate the operator in finite dimensions, extended dynamic mode decomposition (EDMD) is a commonly used algorithm. It requires a finite list of functionals and a set of snapshots from the system to compute an approximation of the operator and its corresponding spectrum. Instead of choosing the list of functionals directly, it can be implicitly defined via kernels, a method known as kernel extended dynamic mode decomposition (kEDMD). However, one still needs to define the kernel and choose its parameter values.

In this paper, we aim to streamline this process by extending dictionary learning for EDMD to kernel learning in kEDMD. By simplifying kEDMD we show how to perform gradient-based optimization over the learnable kernel parameters, and demonstrate that this method leads to useful kernels for the original kEDMD. The focus of our work is a method that takes a weighted list of kernels with randomly initialized values as input and outputs a list of kernels and parameter values suitable for approximating the Koopman operator of the underlying system. We demonstrate that unimportant kernels can be removed from the list by analyzing the weights in the weighted sum. We evaluate the method across several experiments, including the Duffing oscillator and the Kuramoto-Sivashinsky PDE, showcasing the method's different strengths.
\end{abstract}

\section{Introduction}
A key component of a dynamical system is its trajectories described by the systems evolution operator, and notions such as stability, periodicity, and other geometrical properties are of interest. Traditionally, this has been studied through the system's phase space. Another approach is to reframe it to an operator view of the system \citep{mezic-2004, mezic-2005, mezic-2023}, namely the Koopman operator framework \citep{koopman-1931}. Here, the evolution of the system is studied in a functional space rather than in the phase space. Elements in this space are known as observables, and the time evolution of observables relevant for the specific system can then be studied. One example of an observable is the position of a state along one axis, which allows us to retrieve information from the phase space using observables. The reason for looking at the system through its corresponding Koopman operator is that the operator is linear, regardless of whether the underlying system is linear or not~\citep{budisic-2012}. Linearity of the operator also means that one can study the system through the spectrum and the corresponding eigenfunctions. However, in most cases, the required space of observables is infinite-dimensional. This then requires finite-dimensional approximations, for which we may use well-studied tools from numerical linear algebra due to the linearity of the operator. Studying systems through the Koopman operator lens has been used for prediction and system control, with applications ranging from oceanography \citep{giannakis-2015}, fluid dynamics \citep{mezic-2013, sharma-2016}, molecular kinetics \citep{nuske-2014, nuske-2016}, optimal control \citep{korda-2018b, mamakoukas-2019, bevanda-2022a} and beyond.

Several numerical methods have been proposed for approximating the Koopman operator \citep{rowley-2009, schmid-2010, williams-2015a, giannakis-2015, li-2017, klus-2018, colbrook-2021,schmid-2022}, and one data-driven approach is the extended dynamic mode decomposition (EDMD) \citep{williams-2015a}. As its name suggests, EDMD is an extension of dynamic mode decomposition (DMD) \citep{schmid-2010} using a larger list of (crucially, nonlinear) observables of the state. This extension allows the numerical approximation to converge to the operator under certain conditions, and error bounds exist \citep{mezic-2020, nuske-2023a, llamazares-elias-2024}. However, truncation to finite dimensionality also introduces challenges, such as spectral pollution, and methods such as residual DMD have been proposed to address them \citep{colbrook-2021}. The key components of EDMD are the dictionary, which is a finite collection of observables, and the set of snapshots from the time evolution of the system. We first evaluate the dictionary on the snapshots, then use the resulting matrices to approximate the Koopman operator, its spectrum, and eigenfunctions. Assuming one has a sufficient number of snapshots from the underlying dynamical system, the choice of functionals to include in the dictionary is the crucial part \citep{kutz-2018}. A good dictionary captures the properties of the system one is interested in to a satisfactory degree of precision, without being too large. The more observables included in the dictionary, the greater the computational effort is required, and this can also lead to numerical issues. To find a suitable dictionary, dictionary learning has been proposed \citep{li-2017}, and with several approaches in similar spirit using neural networks \citep{li-2017, otto-2019, constante-amoresricardo-2024, jin-2024, bolager-2024}, reservoir computing \citep{gulina-2021, bollt-2021a}, and random features \citep{salam-2022} have followed. In dictionary learning, one initializes a dictionary, typically using neurons from a neural network, and then applies gradient-based optimization to improve the dictionary. However, to make it computationally feasible, one still needs to limit the size of the dictionary. If the system requires many more observables than snapshots, implicitly defining the dictionary via a kernel may be a solution \citep{williams-2015b}.

Kernels play a major role in machine learning and can often be applied through the so-called kernel trick to replace inner products \citep{hastie-2009, owhadi-2019,hamzi-2024}. It also plays a pivotal role in techniques such as kernel regression and Gaussian process \citep{rasmussen-2005,yang-2021c, lee-2025b}, and has deep connections to neural networks \citep{jacot-2018,lee-2018, avidan-2024}. Kernels have also been applied to dynamical systems in general \citep{bouvrie-2017, hamzi-2021}, to approximate the underlying manifold of the system 
\citep{haasdonk-2018,bitteracher-2019, haasdonk-2021}, for uncertainty propagation \citep{hou-2024}, and to the controlled system setting \citep{hamzi-2019}. One of the motivations behind such methods is that one maps the state to a high-dimensional (possibly infinite-dimensional) space, where one can treat the system in a linear fashion \citep{bouvrie-2010, bouvrie-2012}. This is very similar to the motivation behind the Koopman operator framework. In terms of the Koopman operator, works with kernels include, but are not limited to, approximating the Koopman operator with a matrix \citep{williams-2015b, klus-2020}, approximating the eigenfunctions of the operator \citep{lee-2025}, forecasting through kernel analog techniques \citep{alexander-2020}, and for model predictive control \citep{bold-2025}. Benefits include an inherent connection to reproducing kernel Hilbert spaces (RKHS) \citep{boulle-2025}, having far fewer parameters than typical neural networks, and better interpretability than neural networks. In \citet{williams-2015b}, they propose to implicitly define the dictionary in the EDMD algorithm using kernels, known as kernel EDMD (kEDMD). The method has also been studied for convergence \citep{philipp-2025}, and techniques developed for EDMD have been extended to kEDMD \citep{colbrook-2021}. The upside of using kernels is that the size of the Koopman approximation is bounded by the number of snapshots, but with the downside that one cannot evaluate the dictionary directly. The choice of kernels has so far relied on intuition and manual parameter tuning. One reason is that we cannot simply extend dictionary learning for EDMD to kernels, since we cannot evaluate the dictionary. In this paper, we aim to tackle this problem. We show how, in theory, we can perform dictionary learning for the original kEDMD method. We then simplify the kEDMD method, showing that this simplification is equivalent to the original method, and then develop a method to learn the kernel parameters for the simplified version. The simplification allows for many more loss functions and is closer to the original dictionary learning scheme. This method can then take in a weighted sum of well-known kernels, learn the parameters given a suitable loss function, and also be used to find a sparse representation of the kernels; an idea similar to automatic relevance determination (ARD) \citep{williams-1995, mackay-1996}.

The rest of this paper is outlined as follows: in \Cref{sec:preliminaries} we introduce the Koopman operator, the different kinds of EDMD methods, and dictionary learning for EDMD. In \Cref{sec:method}, we show how we can perform dictionary learning for kEDMD, as well as how we can simplify the original kEDMD method and develop a dictionary learning for the new version. In \Cref{sec:experiments}, we perform several experiments using dictionary learning for the simplified version, and in \Cref{sec:conclusion}, we provide a conclusion and share a few thoughts on future work.

\section{Preliminaries}\label{sec:preliminaries}
We let \(T \colon \mathcal{X} \rightarrow  \mathcal{X}\) be the evolution operator of a discrete dynamical system, with \(\mathcal{X} \subseteq \R^d\), i.e., \(\vx_{t+1} = T(\vx_t)\) for \(t \in \mathbb{N}_{\geq 0}\). In addition, we let \(\mathcal{F}\) be a functional space over \(\mathcal{X}\) where we usually assume \(\mathcal{F}\coloneq \{f\colon \mathcal{X}\rightarrow \R \colon \lVert f\rVert_{L_2(\mathcal{X}, \mu)} < \infty\}\), where \(\lVert \cdot \rVert_{L_2(\mathcal{X}, \mu)}\) is the canonical \(L_2\) norm with a suitable measure \(\mu\). The Koopman operator \(\mathcal{K}_T\colon \mathcal{F} \rightarrow \mathcal{F}\), also known as the composition operator, is then defined as 
\begin{align*}
    [\mathcal{K}_T f](\vx) = f(T(\vx)), \quad f \in \mathcal{F}, \vx \in \mathcal{X}. 
\end{align*}
For convenience’s sake, we opt to drop the subscript \(T\) in \(\mathcal{K}_T\). In essence, through the Koopman operator, we change the viewpoint we analyze the dynamical system from; instead of considering the possibly nonlinear evolution of \(T\) in a finite-dimensional space \(\mathcal{X}\), we analyze how functionals evolve by applying the linear operator \(\mathcal{K}\) on an infinite-dimensional space. The trade-off by gaining linearity while losing finite-dimensionality, allows us to study the dynamical system through the spectrum and eigenfunctions of \(\mathcal{K}\). For example, let \(\{\varphi_j\}_{j=1}^{\infty}\) be eigenfunctions of \(\mathcal{K}\), and assume \(f \in \spa \{\varphi_j\}_{j=1}^{\infty}\), then
\begin{align*}
    f(\vx_t) = [\mathcal{K}^t f](\vx_0) = \sum_{j=1}^{\infty} c_j \lambda_j^t \varphi_j(\vx_0),
\end{align*}
where \(\lambda_j\) is the eigenvalue associated to the eigenfunction \(\varphi_j\), and \(\{c_j\}_{j=1}^{\infty}\) are the coefficients such that \(f = \sum_{j=1}^{\infty} c_j \varphi_j\). These coefficients are usually referred to as the Koopman modes. One common functional often considered is \(f_i(\vx) = \vx_i\), i.e., a mapping from the vector \(\vx\) to the \(i\)'th component of the vector. If \(f_i \in \mathcal{F}\) for \(i=1,\dots, d\), we may stack the functionals \(f= [f_1, \dots, f_d]\tran\) and apply the Koopman operator
\begin{align*}
   \vx_t =  f(\vx_t) = \sum_{j=1}^{\infty} \vc_j \lambda_j^t \varphi_j(\vx_0).
\end{align*}
For further reading on the Koopman operator and its application to dynamical systems, see \citet{budisic-2012}.

\subsection{Extended dynamic mode decomposition}
Approximating the Koopman operator in a data-driven manner can be achieved using extended dynamic mode decomposition (EDMD). We give a brief introduction here, as it will be helpful when we rewrite kernel-based EDMD later on. We first assume we have access to snapshots from the dynamical system, 
\begin{align*}
    \mathcal{D} = \{(\vx_1, \vy_1), (\vx_2, \vy_2), \dots, (\vx_N, \vy_N), (\vx_{N+1}, \vy_{N+1})\},
\end{align*}
and define the two matrices
\begin{align*}
    X = [\vx_1, \vx_2, \dots, \vx_N]\tran \in \mathbb{R}^{N\times d}, \quad &\text{and}\quad Y= [\vy_1, \vy_2, \dots, \vy_N]\tran \in \mathbb{R}^{N\times d},
\end{align*} 
where \(\vy_n = T(\vx_n)\). We then select a set of observables from the space \(\mathcal{F}\), which we call a dictionary \(\Psi = \{\psi_1, \psi_2, \dots, \psi_M\} \in \F^{M}\), with
\begin{align*}
    \psi_i \colon \mathcal{X} \rightarrow \R,\quad \text{for } i=1,2,\dots, M.
\end{align*} 
When evaluating the dictionary on our dataset, we denote
\begin{align*}
    \Psi(\vx) = [\psi_1(\vx), \psi_2(\vx), \dots, \psi_M(\vx)]\tran \in \R^M, \quad \Psi(X) = [\Psi(\vx_1), \Psi(\vx_2), \dots, \Psi(\vx_N)] \in \R^{M \times N}.
\end{align*}
We can then obtain an approximation of the Koopman operator \(K\in \R^{M\times M}\), by minimizing 
\begin{align}\label{eq:opt_full_K}
     K&= \underset{\tilde{K} \in \R^{M\times M}}{\arg\min} \lVert \Psi(Y) - \tilde{K}\Psi(X) \rVert^2_F,
\end{align}
where \(\lVert \cdot \rVert_F\) is the Frobenius norm. A closed form solution can be written as \(K = \Psi(Y) \Psi(X)^+\), where \(^+\) is the (pseudo)inverse. From this approximation, one can obtain an approximation of eigenfunctions of the Koopman operator given by
\begin{align*}
    \phi_j = \vw_j \Psi,
\end{align*}
where \(\vw_j\) is the \(j\)th left eigenvector of \(K\). We denote all the eigenfunction approximations as 
\begin{align*}
    \Phi = W \Psi,
\end{align*}
where each row in \(W\) is a left eigenvector. Finally, once the functional(s) are defined, e.g., \(f=[f_1, \dots, f_d]\tran\) where \(f_i(\vx) = \vx_i\), we can approximate the modes. Let \(d_o\) be the output dimension of \(f\) and \(\Lambda\) be a diagonal matrix with the eigenvalues of \(K\), we then find the modes \(C_m\) by
\begin{align}\label{eq:koopman_modes}
    C_m = \argmin_{\Tilde C \in \C^{d_o \times M}} \lVert f(Y) - \Tilde C \Lambda \Phi(X) \rVert_F^2.
\end{align}
When \(W\) is full-rank, one can show that the resulting modes are the right eigenvectors, but numerically we rather solve the least-squares problem with suitable regularization. 

\subsubsection{Residuals and spectral pollution}\label{sec:res_spec}
A well-known problem when approximating the spectrum of infinite-dimensional operators is spectral pollution. That is, the approximation produces eigenvalues that are not related to the operator, but are rather artifacts due to the finite-dimensional discretization of the operator \citep{lewin-2009,colbrook-2019}. This also holds true for EDMD, and methods to quantify the issue and attempt to remove the pollution have been proposed. In particular, \citet{colbrook-2021} compute the residuals for each eigenvalue and eigenfunction, which should be low if the approximated eigenvalue is close to a true eigenvalue. Let \((\lambda, \phi)\) be an approximate eigenvalue and eigenfunction, with \(\vw\) being the corresponding left eigenvector; then we may compute
\begin{align*}
    \text{res}^2(\lambda, \phi) &= \frac{\lvert\int_{\mathcal{X}} [\mathcal{K}\phi](\vx) - \lambda \phi(\vx)\rvert^2 d\mu(\vx)}{\int_{\mathcal{X}} \lvert \phi(\vx)\rvert^2d\mu(\vx)}\\ 
    &\approx \frac{\vw \left(\Phi(Y) \Phi(Y)\tran - \lambda \Phi(Y)\Phi(X)\tran - \overline{\lambda} \Phi(X)
    \Phi(Y)\tran + \lvert\lambda\rvert^2 \Phi(X)\Phi(X)\tran\right) \vw^*}{\vw \left(\Phi(X)\Phi(X)\tran \right)\vw^*}.
\end{align*}
Here the left eigenvectors are usually found through solving the generalized eigenvalue problem, instead of computing the Koopman approximation first. The residuals can then be used to compute the eigenpairs with less spectral pollution by setting an appropriate threshold. In certain systems this can be quite beneficial \citep{colbrook-2021}.

Apart from spectral pollution, two more challenges to the EDMD algorithm are (a) one needs to choose a suitable dictionary, which can partially be resolved by learning the dictionary \citep{li-2017}, and (b) where \(M \gg N\), the computational cost of solving for \(K\) becomes large; this can be resolved by kernel extended dynamic mode decomposition (kEDMD) \citep{williams-2015b}.

\subsection{Kernel extended dynamic mode decomposition}
To alleviate the computational burden imposed by a large \(M\) one can, instead of defining the dictionary in EDMD explicitly, define it implicitly through a kernel \citep{williams-2015b}. For simplicity, let us assume that the function space induced implicitly by a kernel \(g\colon \mathcal{X} \times \mathcal{X} \rightarrow \R\) is of finite size \(M \gg N\). The function space can then be written as a span of \(M\) functions which we may refer to as the dictionary \(\Psi\). To derive kEDMD, we start by  applying SVD to \(\Psi(X) = \Tilde Q \Tilde \Sigma \Tilde Z\tran\), and then reducing it to \( \Psi_P = Q\Sigma Z\tran\), where \(Q\in \R^{M\times N}\) is now a semi orthogonal matrix and \(\Sigma, Z \in \R^{N\times N}\). We then define an orthogonal projection \(P = QQ\tran\), which projects onto the space spanned by the columns of Q. After this reduction of the original dictionary, we may define the Koopman approximation similarly to \Cref{eq:opt_full_K}, i.e.,
\begin{align*}
   K_P&= \underset{\tilde{K} \in \R^{M\times M}}{\arg\min} \lVert P\Psi(Y) - (P\tilde K)P\Psi(X)\rVert^2_F.
\end{align*}
Considering the solution \(K_P = (P\Psi(Y))(P\Psi(X))^+\) one can show that
\begin{align}\label{eq:def_khat}
    K_P = (P\Psi(Y))(P\Psi(X))^+ = Q(Z\Sigma^+)\tran ((P\Psi(X))\tran (P\Psi(Y))) (Z\Sigma^+) Q\tran = Q \Hat K Q\tran.
\end{align}
The key part of the equation above is that \(\Hat K \in \R^{N\times N}\), and if one only needs to compute \(\Hat K\), the high dimensionality of \(M\) is avoided. As it turns out, to capture the full eigenspace of \(K_P\), one only needs \(\Hat K\). This was shown in \citet{williams-2015b}, and is summarized as the following proposition.
\begin{proposition}\label{prop:khat_spectrum}[\citet{williams-2015b}]
        The following points hold for the relationship between the eigenvectors of \(K_P\) and \(\hat K\), with \(\lambda\) being the corresponding eigenvalue,
    \begin{itemize}
        \item If \(\vw\) is a left eigenvector of \(K_P\), then \(\hat \vw = \vw Q\) is a left eigenvector of \(\hat K\).
        \item If \(\hat \vw\) is a left eigenvector of \(\hat K\), then \(\vw = \hat \vw Q\tran \) is a left eigenvector of \(K_P\). 
        \item If \(\vv\) is a right eigenvector of \(K_P\), then \(\hat \vv = Q\tran \vv\) is a right eigenvector of \(\hat K\).
        \item If \(\hat \vv\) is a right eigenvector of \(\hat K\), then \(\vv = Q\vv\) is a right eigenvector of \(K_P\).
    \end{itemize}
\end{proposition}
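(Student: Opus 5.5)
The whole argument rests on the factorization \(K_P = Q\hat K Q\tran\) from \Cref{eq:def_khat} together with semi-orthogonality, \(Q\tran Q = I_N\). Each of the four items then becomes a one-line computation: multiply the eigenvector equation by \(Q\) or \(Q\tran\) on the appropriate side and cancel \(Q\tran Q\).

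For the second and fourth items (lifting from \(\hat K\) to \(K_P\)), let \(\hat\vw\hat K = \lambda\hat\vw\). With \(\vw = \hat\vw Q\tran\) we get
\[
\vw K_P = \hat\vw Q\tran Q\hat K Q\tran = \hat\vw\hat K Q\tran = \lambda\hat\vw Q\tran = \lambda\vw.
\]
Symmetrically, if \(\hat K\hat\vv = \lambda\hat\vv\), then \(K_P Q\hat\vv = Q\hat K\hat\vv = \lambda Q\hat\vv\). The statement's ``\(\vv = Q\vv\)'' should read \(\vv = Q\hat\vv\). Nonvanishing is immediate: \(Q\tran\) and \(Q\) have trivial kernel on \(\R^N\), since \(Q\tran Q = I\). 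So \(\hat\vw \neq 0\) gives \(\hat\vw Q\tran \neq 0\), and \(\hat\vv \neq 0\) gives \(Q\hat\vv \neq 0\).

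For the first and third items (restricting from \(K_P\) to \(\hat K\)), let \(\vw K_P = \lambda\vw\). Right-multiplying by \(Q\) gives
\[
\vw Q\hat K = \vw Q\hat K Q\tran Q = \vw K_P Q = \lambda\vw Q,
\]
so \(\hat\vw = \vw Q\) satisfies the eigen-equation. The right case is analogous: \(Q\tran K_P\vv = \hat K Q\tran\vv = \lambda Q\tran\vv\).

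The one real obstacle is showing \(\hat\vw \neq 0\) (respectively \(Q\tran\vv \neq 0\)), because \(Q\) has a nontrivial left kernel in \(\R^M\). I would split on the eigenvalue. If \(\lambda \neq 0\), then \(\vw = \lambda^{-1}\vw K_P = \lambda^{-1}(\vw Q)\hat K Q\tran\), so \(\vw Q = 0\) would force \(\vw = 0\), a contradiction. Likewise \(\vv = \lambda^{-1}Q\hat K(Q\tran\vv)\), so \(Q\tran\vv = 0\) would force \(\vv = 0\). If \(\lambda = 0\), the claim genuinely fails: any vector orthogonal to \(\mathrm{range}(Q)\) is a zero-eigenvector of \(K_P\) that is killed by \(Q\) or \(Q\tran\). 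I would therefore state the first and third items for \(\lambda \neq 0\), or equivalently for eigenvectors lying in \(\mathrm{range}(Q)\), as in \citet{williams-2015b}. In that setting \(\vv = QQ\tran\vv\), so \(Q\tran\vv \neq 0\) follows directly. This restriction costs nothing, since the nonzero spectrum of \(K_P\) is exactly that of \(\hat K\).
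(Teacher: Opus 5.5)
Your proof is correct and takes the same approach the paper points to. The paper gives no proof of its own: it defers to \citet{williams-2015b} and remarks only that the result follows from the reduced SVD. The direct verification you carry out, from \(K_P = Q\hat K Q\tran\) and \(Q\tran Q = I\), is exactly that argument.

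You are also right on two points about the statement itself. First, the first and third items fail as printed when \(\lambda = 0\) and \(M > N\): any nonzero \(\vw\) with \(\vw Q = 0\) is a left eigenvector of \(K_P\) with eigenvalue zero that is sent to zero. Second, the fourth item should read \(\vv = Q\hat\vv\).

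One small slip: ``\(\lambda \neq 0\)'' and ``the eigenvector lies in \(\mathrm{range}(Q)\)'' are not equivalent. The first implies the second, but not conversely, since zero-eigenvectors lifted from the kernel of \(\hat K\) lie in \(\mathrm{range}(Q)\) with \(\lambda = 0\). The range condition is therefore the strictly more general hypothesis, and your nonvanishing argument works under it.
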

It is worth noting that the result really follows from the fact that we apply reduced SVD to the original dictionary \(\Psi\) to achieve \(\Psi_P\), and from this reduction we only have \(N\) eigenvalues/eigenvector pairs to represent the same information as we obtain from \(K_P\).

To introduce kernels to the method described so far, we note that, to compute \(\Hat K\), we need to compute \(Z\), \(\Sigma\), and \(((P\Psi(X))\tran (P\Psi(Y)))\). For the latter, as the matrix are produced by inner products between the dictionary elements induced by the kernel, the resulting matrix is the kernel matrix between elements in \(X\) and elements in \(Y\), namely \(g(Y,X) =((P\Psi(X))\tran (P\Psi(Y))))\). For the former two, note that 
\begin{align*}
    g(X,X) =((P\Psi(X))\tran (P\Psi(X)))) = Z\Sigma Q\tran Q \Sigma Z\tran = Z\Sigma^2 Z\tran,
\end{align*}
which means we may compute \((Z, \Sigma)\) through the eigendecomposition of the Gram matrix. Hence, we only need to define the kernel to compute \(\Hat K \) and its spectrum. However, this also means we do not have access to the dictionary and cannot evaluate it directly, which is even clearer when choosing a kernel that induces an infinite-dimensional function space. We also do not have access to the approximate eigenfunctions \(P\Phi = WP\Psi\), as this also requires the dictionary \(\Psi\). Fortunately, we can evaluate the approximate eigenfunctions pointwise. Evaluating at points in \(X\) can be done by
\begin{align*}
   P\Phi(X) = W P \Psi(X)) = WQQ\tran Q \Sigma Z\tran = \Hat W \Sigma Z\tran,
\end{align*}
where \(\Hat W\) is the left eigenvectors of \(\Hat K\), and we apply \Cref{prop:khat_spectrum} for the equality \(WQ = \Hat W\). For evaluating the eigenfunctions at new points \(\vx \in \mathcal{X}\), we have
\begin{align*}
    P\Phi(\vx) = WP\Psi(\vx) = \Hat W \Sigma^+ Z\tran (P\Psi(X))\tran(P\Psi(\vx)) = \Hat W \Sigma^+ Z\tran g(\vx, X).
\end{align*}
The apparent trade-off between access to the dictionary and avoiding computational cost involving \(M\), can be beneficial if \(M \gg N\) and the goal is to study the spectrum or predict trajectories. Choosing a kernel can also be easier if one has prior information to the relationship between points \(\mathcal{X}\), and analytically the connection to reproducing kernel Hilbert spaces and Gaussian processes among others can be of interest. However, one still needs to choose a suitable kernel.

\subsection{Dictionary learning for EDMD}\label{sec:dl_edmd}
To choose a suitable dictionary of a given size \(M\) in the EDMD algorithm, \citet{li-2017} introduce dictionary learning. Namely, by parameterizing the dictionary, in their case a neural network with \(M\) outputs, they optimize the parameters using gradient optimization. More specifically, denoting the parameterized dictionary as \(\Psi_{\theta}\), where \(\theta\) are the weights and biases of the neural network, one can alternate between approximating the Koopman operator and using the approximation to optimize the parameters. That is, iterating between 
\begin{enumerate}
    \item holding the values of \(\theta\) fixed and computing \(K\) following \Cref{eq:opt_full_K},
    \item holding \(K\) fixed and performing one step of gradient descent on \(\theta\) using a suitable loss function.
\end{enumerate} 
For a suitable loss function, \citet{li-2017} propose to use a dictionary loss function, namely
\begin{align*}
    \mathcal{L}_{dict}(\theta) = \lVert \Psi_{\theta}(Y) - K \Psi_{\theta}(X)\rVert_F^2.
\end{align*}
In addition to the dictionary loss function, they added an \(L_2\) regularization of \(\theta\) for stability. A slight issue with the proposed loss is that minimizing it allows trivial solutions, such as \(\Phi(\vx) = 0\). To prevent this, one also includes a few non-parameterized functions, such as the identity function, in the dictionary. The approach can then be summarized as in \Cref{alg:dl_edmd}.

\begin{minipage}{1.0\linewidth}
    \begin{algorithm}[H]
        \caption{Dictionary learning for EDMD, using a parameterized dictionary \(\Psi_{\theta}\). As input it takes snapshots from the dynamical system \((X,Y)\), initial parameter value \(\theta_0\), learning rate \(\eta >0\), regularization constant \(\beta>0\), and the number of epochs. We denote the identity function as \(f_{\text{Id}}\), i.e., \(f_{\text{Id}}(\vx)= \vx\).}\label{alg:dl_edmd}
        \begin{algorithmic}
            \Procedure{DL-EDMD}{$X$, $Y$, $\theta_0$, $\eta$, $\beta$, $N_{\text{epochs}}$}
            \State $\theta \leftarrow \theta_0$
            \State $\Bar{\Psi}_{\theta} \leftarrow \Psi_{\theta} \cup f_{\text{Id}}$
            \For{$e=1,2,\dots, N_{\text{epochs}}$}
                \State $K \leftarrow \Bar{\Psi}_{\theta}(Y) \Bar{\Psi}_{\theta}(X)^+$
                \State $\theta = \theta - \eta \frac{\partial}{\partial \theta}(\lVert \Bar{\Psi}_{\theta}(Y)  - K \Bar{\Psi}_{\theta}(X)\rVert_F^2 + \beta\lVert \theta \rVert^2_2$)
            \EndFor
            \State Return $\theta$
            \EndProcedure
    \end{algorithmic}
    \end{algorithm}
\end{minipage}

The method has successfully been applied to dynamical systems, in particular by using neural networks to parameterize the dictionary. Extensions have been proposed to avoid the sequential nature of alternating between computing \(K\) and performing the gradient step \citep{constante-amoresricardo-2024}. The question then becomes whether something similar can be applied for parameterized kernels in kEDMD.

\section{Method}\label{sec:method}
A major issue when it comes to dictionary learning for kEDMD (kEDMD-DL) is the fact that we do not have access to the dictionary directly, and hence defining a loss function similar to the dictionary loss function in \Cref{sec:dl_edmd} is difficult. We will first show how this in theory can be performed, before we show how we can simplify kEDMD to allow for a version of kEDMD-DL closer to the dictionary learning in \Cref{sec:dl_edmd}.

\subsection{Loss functions for kEDMD-DL}\label{sec:loss_trunc_kedmd}
To perform dictionary learning for kEDMD, we assume the kernel \(g_{\theta}\) is parameterized by \(\theta\), and the dictionary induced implicitly is denoted as \(\Psi_{\theta}\). To be able to compute the dictionary loss similarly to dictionary learning for EDMD, we need to be able to evaluate the following loss function
\begin{align}\label{eq:kedmd_dictloss}
    \mathcal{L}_{dict}(\theta) = \lVert P\Psi_{\theta}(Y) - K_P(P\Psi_{\theta}(X)) \rVert^2_F.
\end{align}
We use \(K_P\) here instead of \(\Hat K\) simply because the dictionary implicitly defined through the kernel has \(M\) elements. To show how we can evaluate the loss function, we need a few properties of \(K_P\) and \(\Hat K\). Firstly, remember that \(P\) is idempotent and equal to its pseudoinverse, and therefore we have 
\begin{align}\label{eq:pkp}
    PKP = P\Psi_{\theta}(Y)\Psi_{\theta}(X)^+P = P\Psi_{\theta}(Y)(P\Psi_{\theta}(X))^+ = K_P.
\end{align}
If one assumes full-rank of \(\Hat W\), i.e., the left eigenvectors of \(\Hat K\), we have
\begin{align}\label{eq:pvwp}
    PVWP = QQ\tran V W QQ\tran = Q \Hat V \Hat WQ\tran = P
\end{align}
where we apply \Cref{prop:khat_spectrum} and the fact that with full-rank the inverse of \(\Hat W\) is \(\Hat V\). Using \Cref{eq:pkp} and \Cref{eq:pvwp}, we may then show 
\begin{align*}
    PKP\Psi_{\theta}(X) =  (PK) P (P\Psi_{\theta}(X)) =  (PK) PVWP  (P\Psi_{\theta}(X)) = P(PKP) V (WP\Psi_{\theta}(X)) = PV\Lambda  P\Phi_{\theta}(X)
\end{align*}
and 
\begin{align*}
    P\Psi(Y) = PP\Psi(Y) = PVWP\Psi(Y) = PVP\Phi(Y).
\end{align*}
Here \(\Phi_{\theta}\) is the approximate eigenfunctions using the dictionary \(\Psi_{\theta}\). Applying \Cref{prop:khat_spectrum} to these two equalities, we end up with the following loss function
\begin{align*}
   \mathcal{L}_{dict}(\theta) &= \lVert PVP\Phi_{\theta}(Y) - PV\Lambda P\Phi_{\theta}(X)\rVert^2_F\\ 
   &= \lVert Q\Hat V P\Phi_{\theta}(Y) - Q\Hat V \Lambda P\Phi_{\theta}(X)\rVert_F^2 = \lVert \Hat V P\Phi_{\theta}(Y) - \Hat V \Lambda P\Phi_{\theta}(X) \rVert_F^2,
\end{align*}
where the final equality follows from the fact that \(Q\) is an isometry in the Euclidean space when applied from the left \citep{abadir-2005}. As we already know how to evaluate the eigenfunctions, we end up with the following loss expression
\begin{align*}
    \mathcal{L}_{dict}(\theta) = \lVert P\Psi_{\theta}(Y) - K_P P\Psi_{\theta}(X) \rVert_F^2 = \lVert \Hat V \Hat W \Sigma^+ Z\tran g_{\theta}(Y,X) - \Hat V \Lambda \Hat W \Sigma Z\tran\rVert_F^2,
\end{align*}
where one can simplify even further due to the full-rank assumption if one wishes to. If the loss function is not evaluated at the same snapshots as used to compute \(\Hat K\), say we rather use \(\Tilde X, \Tilde Y \in \R^{ d\times \Tilde M}\), we modify the expression above slightly to 
\begin{align*}
    \mathcal{L}_{dict}(\theta) = \lVert \Hat V \Hat W \Sigma^+ Z\tran g_{\theta}(\Tilde Y,X) - \Hat V \Lambda \Hat W \Sigma^+ Z\tran g_{\theta}(\Tilde X, X)\rVert_F^2.
\end{align*}
The fact that we compute \(\Hat K\) and then perform the gradient step means we only need to compute the gradients w.r.t. the kernel matrices. However, we are still facing two issues. The first one being that we  assume the full-rank of \(\Hat V\) and \(\Hat W\), which is not guaranteed. The second issue is that the dictionary loss function can be minimized by trivial solutions. The former may cause instabilities when the assumption is not satisfied, the latter is harder to fix due to the fact that we cannot add additional functions to the dictionary, as was done in dictionary learning for EDMD. The addition to the loss function such as adding the term \(\lVert X\tran - C_p\Psi_{\theta}(X)\rVert_F^2\), where \(C_p\) is a projection down to the state space, may be a possible fix. Another one is to rather use an eigen loss,
\begin{align}\label{eq:kedmd_eigloss}
    \mathcal{L}_{eig}(\theta) = \lVert P\Phi_{\theta}(Y) - \Lambda P\Phi_{\theta}(X)\rVert_F^2,
\end{align}
or an eigen prediction loss 
\begin{align}\label{eq:kedmd_eigpredloss}
    \mathcal{L}_{eig-pred}(\theta) = \lVert Y\tran - C_m\Lambda P\Phi_{\theta}(X)\rVert_F^2,
\end{align}
where \(C_m\) are the Koopman modes. We find however that using kEDMD in its original form is still too limiting when it comes to dictionary learning, and indeed harder to work with computationally, and therefore proceed to simplify the approach.

\subsection{Simplifying kEDMD}\label{sec:skedmd}
When developing the kEDMD, taking the approach of starting out with truncated SVD may not be entirely necessary. Instead, we start by defining a dictionary as one would in EDMD, and rather involve the kernel directly. That is, the dictionary, given a kernel \(g_{\theta}\), and snapshots \(X = [\vx_1, \dots, \vx_N]\), is defined as 
\begin{align}\label{eq:skedmd_dict}
    \Psi^{sk}_{\theta} = \{g_{\theta}(\cdot, \vx_1), g_{\theta}(\cdot, \vx_2), \dots, g_{\theta}(\cdot, \vx_N)\}.
\end{align}
Here the dictionary are the kernels where the second input is set to be each of the \(N\) snapshots given in \(X\). The size of the dictionary is therefore \(M = N\). Applying the standard EDMD now gives us 
\begin{align}\label{eq:k_simplified}
    K = \Psi^{sk}_{\theta}(Y) \Psi^{sk}_{\theta}(X)^+ = g_{\theta}(Y,X) g_{\theta}(X,X)^+,
\end{align}
where the dimensionality of \(K\) is \(N\times N\), and therefore similar to computing \(\Hat K\). Even more so, the two matrices one needs is the kernel matrix evaluated at \(Y,X\) and \(X,X\), which are the same two matrices one needs when computing \(\Hat K\), but for the latter one uses the eigendecomposition of \(g_{\theta}(X,X)\) instead of the inverse. This relationship, as far as we are aware, was first identified in \citet{gonzalez-2021}, but not explored further. It has since then been independently from this paper explored further in work such as \citet{boulle-2025}. We will now present our results showing the clear connection between the two versions of kernel EDMD, and also how we may map between these two seamlessly.

We first start by denoting \(\Hat K\) as \(K_{tr}\) (\textit{tr} for truncated SVD) and \(K\) defined in \Cref{eq:k_simplified} by \(K_{sk}\). We now show explicitly the connection between the two approaches, and also the resulting eigenvalues and eigenvectors similarly to \Cref{prop:khat_spectrum}.
\begin{proposition}\label{prop:simp_kedmd}
    Let \(Z\) and \(\Sigma\) be defined as in \Cref{eq:def_khat}. There exists a linear map between \(K_{tr}\) and \(K_{sk}\), namely,
    \begin{align*}
        K_{tr} = (Z \Sigma)^+ K_{sk} Z\Sigma.
    \end{align*}
    We then have the following relations for the eigenvalue/eigenvector pairs of \(K_{tr}\) and \(K_{sk}\),
    \begin{itemize}
        \item If \((\lambda, w_{tr}, v_{tr})\) are the eigenvalue and corresponding left/right eigenvectors for \(K_{tr}\), then \((\lambda, w_{tr} \Sigma^+ Z\tran, Z\Sigma v_{tr})\) are the eigenvalue and corresponding left/right eigenvectors for \(K_{sk}\).
        \item If \((\lambda, w_{sk}, v_{sk})\) are the eigenvalue and corresponding left/right eigenvectors for \(K_{sk}\), then \((\lambda, w_{sk} Z\Sigma, \Sigma^+Z\tran v_{sk})\) are the eigenvalue and corresponding left/right eigenvectors for \(K_{tr}\).
    \end{itemize}
\end{proposition}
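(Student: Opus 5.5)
The plan is a direct computation built on two facts from the derivation of \Cref{eq:def_khat}. First, \(g_{\theta}(X,X) = Z\Sigma^2 Z\tran\) with \(Z\in\R^{N\times N}\) orthogonal, so \(g_{\theta}(X,X)^+ = Z(\Sigma^+)^2 Z\tran\). Second, \(K_{tr} = \hat K = (Z\Sigma^+)\tran g_{\theta}(Y,X)(Z\Sigma^+)\). I use the same transpose convention for the cross-kernel matrix as in the paper, so that it is literally the same matrix \(g_{\theta}(Y,X)\) that appears in \Cref{eq:k_simplified}.

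\textbf{Step 1: the linear map.} Substitute \(K_{sk} = g_{\theta}(Y,X)\,Z(\Sigma^+)^2Z\tran\) into \((Z\Sigma)^+K_{sk}Z\Sigma\). Since \(Z\) is orthogonal and \(\Sigma\) is diagonal, \((Z\Sigma)^+ = \Sigma^+Z\tran\). Using \(Z\tran Z = I\) and the diagonal identity \((\Sigma^+)^2\Sigma = \Sigma^+\), the expression collapses to
\begin{align*}
(Z\Sigma)^+K_{sk}Z\Sigma = \Sigma^+Z\tran g_{\theta}(Y,X) Z\Sigma^+ ,
\end{align*}
which is exactly \(K_{tr}\). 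This step holds without any rank assumption.

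\textbf{Step 2: the eigenpair correspondences.} I first assume \(\Sigma\) is invertible, i.e.\ the Gram matrix \(g_{\theta}(X,X)\) is nonsingular, which is the generic situation for strictly positive definite kernels and distinct snapshots. Then \(Z\Sigma\) is invertible and Step 1 becomes a genuine similarity, \(K_{sk} = Z\Sigma\, K_{tr}\,(Z\Sigma)^{-1}\). The four claims then follow from one-line checks.
\begin{itemize}
\item If \(K_{tr}v_{tr}=\lambda v_{tr}\), then \(K_{sk}(Z\Sigma v_{tr}) = Z\Sigma K_{tr}v_{tr} = \lambda Z\Sigma v_{tr}\).
\item If \(w_{tr}K_{tr}=\lambda w_{tr}\), then \((w_{tr}\Sigma^{-1}Z\tran)K_{sk} = w_{tr}K_{tr}\Sigma^{-1}Z\tran = \lambda\, w_{tr}\Sigma^{-1}Z\tran\).
\item The converse directions follow symmetrically from \(K_{tr} = (Z\Sigma)^{-1}K_{sk}Z\Sigma\).
\end{itemize}
In each case \(\Sigma^{-1}=\Sigma^+\), so the formulas agree with the statement. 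Nonzero vectors map to nonzero vectors because the maps are invertible.

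\textbf{Main obstacle: singular \(\Sigma\).} Here \((Z\Sigma)^+Z\Sigma\) is only the orthogonal projection onto the nonzero singular directions, so the similarity fails. I would handle this by splitting \(\R^N = \mathrm{range}(Z\Sigma)\oplus\ker(\Sigma Z\tran)\).

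On the null part, \(K_{sk}\) annihilates \(\ker(\Sigma Z\tran)\), since \(g_{\theta}(X,X)^+\) kills it. Correspondingly, \(K_{tr}\) has zero rows and columns in the truncated indices. So these directions contribute only the eigenvalue \(0\) on both sides.

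On the range, the restricted maps are similar, and the computation of Step 2 goes through with \(\Sigma^+\) in place of \(\Sigma^{-1}\). What remains is to verify that the images \(Z\Sigma v_{tr}\) and \(w_{sk}Z\Sigma\) are nonzero for \(\lambda\neq 0\), i.e.\ that these eigenvectors are not lost in the kernel of the map. I expect this bookkeeping to be the delicate part of the proof. If it becomes unwieldy, I would state the proposition under the invertibility assumption, plus the remark that the correspondence persists for all nonzero eigenvalues.
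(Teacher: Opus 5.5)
Under the assumption that $\Sigma$ is invertible your argument is correct, and it is essentially the paper's own computation: both use $g_\theta(X,X)=Z\Sigma^2Z^\top$, $(Z\Sigma)^+=\Sigma^+Z^\top$ and conjugation by $Z\Sigma$. Your Step 1 is slightly cleaner, since it gets $K_{tr}=(Z\Sigma)^+K_{sk}Z\Sigma$ directly and without any rank assumption. The paper instead first writes $K_{sk}=(Z\Sigma)K_{tr}(Z\Sigma)^+$ by inserting $(Z\Sigma)(\Sigma^+Z^\top)$ in front of $g_\theta(Y,X)$, which tacitly needs either invertibility or the range fact below.

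Your singular-$\Sigma$ sketch, however, puts the difficulty in the wrong place. Nonzeroness for $\lambda\neq0$ is routine; for example, $v_{tr}=\lambda^{-1}K_{tr}v_{tr}$ lies in the range of $\Sigma^+$, on which $\Sigma$ is injective. The unjustified step is ``on the range, the restricted maps are similar.'' This needs $\mathrm{range}(Z\Sigma)$ to be invariant under $K_{sk}$, i.e.\ $\Pi K_{sk}=K_{sk}$ with $\Pi=Z\Sigma\Sigma^+Z^\top$. Your observation that $K_{sk}$ kills $\ker(\Sigma Z^\top)$ only gives the other identity, $K_{sk}\Pi=K_{sk}$. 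Without invariance, $K_{sk}$ can have a block mapping $\mathrm{range}(Z\Sigma)$ into $\ker(\Sigma Z^\top)$, and then $Z\Sigma v_{tr}$ is not a right eigenvector of $K_{sk}$. The missing fact is $\mathrm{range}(g_\theta(Y,X))\subseteq\mathrm{range}(Z\Sigma)$. It holds because $g_\theta(Y,X)=(P\Psi(X))^\top P\Psi(Y)=Z\Sigma\,Q^\top P\Psi(Y)$, so $\Pi\,g_\theta(Y,X)=g_\theta(Y,X)$. Once you have $\Pi K_{sk}=K_{sk}=K_{sk}\Pi$ and $\Sigma\Sigma^+K_{tr}=K_{tr}=K_{tr}\Sigma^+\Sigma$, the pseudo-inverse manipulations, including the nonzeroness checks, go through for every $\lambda\neq0$ with no block decomposition.

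Your fallback is also forced, not merely convenient. If $\sigma_k=0$, the $k$-th row and column of $K_{tr}$ vanish, so $e_k$ is a left and right eigenvector of $K_{tr}$ for $\lambda=0$. Yet $Z\Sigma e_k=0$ and $e_k^\top\Sigma^+Z^\top=0$. The statement as written therefore genuinely requires either invertible $\Sigma$ or $\lambda\neq0$.
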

\begin{proof}
   First recall that \(g_{\theta}(X,X) = Z\Sigma^2 Z\tran\). We then have
   \begin{align*}
        K_{sk} &= g_{\theta}(Y,X) Z \Sigma^+ \Sigma^+ Z\tran = g_{\theta}(Y,X) (Z\Sigma^+) (Z \Sigma^+)\tran,
   \end{align*} 
   and by recalling from \Cref{eq:def_khat} that \(K_{tr} = (Z\Sigma^+)\tran g_{\theta}(Y,X) (Z\Sigma^+)\), we find
   \begin{align*}
        K_{sk} &=  (Z \Sigma) (\Sigma^+Z\tran) g_{\theta}(Y,X) (Z\Sigma^+) (Z \Sigma^+)\tran = (Z\Sigma) K_{tr} (Z\Sigma^+)\tran = (Z \Sigma)K_{tr} (Z\Sigma)^+.
   \end{align*}
   Equivalently, we have
   \begin{align*}
        K_{tr} = (Z\Sigma)^+ K_{sk} (Z\Sigma).
   \end{align*}
   For completeness we also show the relations for the eigenvalue and eigenvectors. Assume \((\lambda, w_{tr}, v_{tr})\) are eigenvalue and corresponding eigenvectors of \(K_{tr}\). Then 
   \begin{align*}
        w_{sk}K_{sk} = w_{tr} \Sigma^+ Z\tran K_{sk} = w_{tr} (Z\Sigma)^+ K_{sk} (Z\Sigma) (Z\Sigma)^+ = w_{tr} K_{tr} (Z\Sigma)^+ = \lambda w_{tr}(Z\Sigma)^+ = \lambda w_{sk},
   \end{align*}
   and
   \begin{align*}
        K_{sk} v_{sk} = K_{sk} Z\Sigma v_{tr} = (Z\Sigma) (Z\Sigma)^+ K_{sk} (Z\Sigma) v_{tr} = \lambda (Z\Sigma) v_{tr} = \lambda v_{sk}.
   \end{align*}
   Similarly, assuming \((\lambda, w_{sk}, v_{sk})\) are eigenvalue and corresponding eigenvectors of \(K_{sk}\), we have
   \begin{align*}
        w_{tr}K_{tr} = w_{sk} Z\Sigma K_{tr} = w_{sk} (Z\Sigma) K_{tr} (Z\Sigma)^+ (Z\Sigma) = \lambda w_{sk} Z\Sigma = \lambda w_{tr}
   \end{align*}
   and
   \begin{align*}
        K_{sk} v_{sk} = K_{sk} Z\Sigma v_{tr} = (Z\Sigma) (Z\Sigma)^+ K_{sk} (Z\Sigma) v_{tr} = \lambda (Z\Sigma) v_{tr} = \lambda v_{sk}.
   \end{align*}
\end{proof}

This result highlights the similarity between the two methods and shows that we can easily map between them via either the Koopman approximations or the eigenspaces directly. It is therefore reasonable to believe that performing dictionary learning for one method, if successful, should yield a good kernel to use for the second method as well. Given that the available loss functions for dictionary learning using the original kEDMD are limited and rely on additional assumptions, we prefer to use the simplified kernel EDMD (skEDMD) when formulating kEDMD-DL. The simplified version can also be viewed as EDMD with a special restriction on the dictionary specified in \Cref{eq:skedmd_dict}, which means one can apply techniques designed for EDMD to skEDMD, and by \Cref{prop:simp_kedmd}, to kEDMD. However, the problem of trivial solutions when using a dictionary loss function, as in \Cref{sec:dl_edmd}, remains to be addressed.

\subsection{Dictionary learning for kEDMD}
By using skEDMD, we can evaluate the dictionary, making it much easier to create different loss functions. To avoid confusion, we denote losses defined in \Cref{eq:kedmd_dictloss}, \Cref{eq:kedmd_eigloss}, and \Cref{eq:kedmd_eigpredloss} as \(\mathcal{L}^{tr}_{dict}\), \(\mathcal{L}^{tr}_{eig}\), and \(\mathcal{L}^{tr}_{eig-pred}\) respectively. One natural choice when thinking of losses would be to follow \Cref{sec:dl_edmd} and use

\begin{align*}
    \mathcal{L}^{sk}_{dict}(\theta) = \lVert \Psi^{sk}_{\theta}(Y) - K_{sk} \Psi^{sk}_{\theta}(X)\rVert_F^2,
\end{align*}
which has the already mentioned issues. Since we cannot add additional functions to \(\Psi^{sk}_{\theta}\), as this would then not be equivalent to the original kEDMD, and \Cref{prop:simp_kedmd} would not hold anymore. A different solution would be to add the term \(\lVert X\tran - C_p\Psi^{sk}_{\theta}(X)\rVert^2_F\), where we compute the mapping down to the state space as 
\begin{align*}
    C_p = \argmin_{\Tilde C \in \C^{d \times N}} \lVert X\tran - \Tilde C\Psi^{sk}_{\theta}(X) \rVert_F^2.
\end{align*}
The matrix is, similarly to \(K_{sk}\), computed while holding the dictionary parameters fixed, and then holding the matrix fixed while updating \(\theta\). Instead of adding this as a second term, one can combine it with \(\mathcal{L}^{sk}_{dict}\) and define
\begin{align*}
    \mathcal{L}^{sk}_{pred}(\theta) = \lVert Y\tran - C_pK\Psi^{sk}_{\theta}(X)\rVert^2_F.
\end{align*}
This loss function is then optimizing for trajectory prediction, however, as we are interested in the spectrum and to use the eigenfunctions for computing trajectories, two other natural loss functions to consider are
\begin{align*}
    \mathcal{L}^{sk}_{eig}(\theta) = \lVert \Phi^{sk}_{\theta}(Y) - \Lambda \Phi^{sk}_{\theta}(X) \rVert_F^2 = \lVert W\Psi^{sk}_{\theta}(Y) - \Lambda  W\Psi^{sk}_{\theta}(X) \rVert_F^2  
\end{align*}
and
\begin{align*}
    \mathcal{L}^{sk}_{eig-pred}(\theta) = \lVert Y\tran - C_m\Lambda \Phi^{sk}_{\theta}(X) \rVert_F^2.
\end{align*}
We can then combine the loss functions into a weighted sum,
\begin{align*}
    \mathcal{L}^{sk}(\theta) = \alpha_1 \mathcal{L}^{sk}_{dict}(\theta) + \alpha_2 \mathcal{L}^{sk}_{pred}(\theta) + \alpha_3 \mathcal{L}^{sk}_{eig}(\theta) + \alpha_4 \mathcal{L}^{sk}_{eig-pred}(\theta),
\end{align*} 
with hyperparameters \(\{\alpha_i \in \mathbb{R}_{\geq 0}\}_{i=1}^4\). The addition of the loss functions involving the approximate eigenfunctions requires additional computational resources to compute the eigenvalues and eigenvectors. We also have observed experimentally that when using only \(\mathcal{L}^{sk}_{pred}\) when performing gradient steps leads to a similar decrease in loss for \(\mathcal{L}^{sk}_{eig-pred}\), and that in general \(\mathcal{L}^{sk}_{dict}\) and \(\mathcal{L}^{sk}_{eig}\) appear less helpful. This is to say, we do not rule out that there might exist systems where weighting the loss functions differently may lead to better learning, but we found so far that \(\mathcal{L}^{sk}_{pred}\) works well on its own.

\subsubsection{Choice of kernels}
Before one can apply dictionary learning to the kernel \(g_{\theta}\), one needs to choose how to parameterize the kernel. That is, we need to choose what kernels to include. Many kernels used in machine learning are available. Although one may choose a single kernel, such as the radial basis function (RBF) kernel, as we do in \Cref{sec:exp1}, we often do not know exactly which kernels will be useful and which will not, so we opt for a weighted sum of kernels. Let \(M_g \in \mathbb{N}_{>0}\) and \(\{g_{\theta_i}\}^{M_g}_{i=1}\) be kernels parameterized by \(\theta_i\) respectively, where we refer to the collection of \(\theta_i\)'s as inner parameters. We then denote \(\theta = \bigcup_{i=1}^{M_g}\{w_i\} \cup \theta_i\), where \(w_i \in \mathbb{R}\) for \(i=1,\dots, M_g\) are the outer parameters or outer weights used when defining the resulting kernel 
\begin{align}\label{eq:weight_sum}
    g_{\theta} =\sum_{i=1}^{M_g} \Tilde{w}_i^2 g_{\theta_i} = \sum_{i=1}^{M_g} \left(\frac{w_i}{\Bar{w}}\right)^2 g_{\theta_i}.
\end{align}
Here \(\Bar{w} = \sum_{i=1}^{M_g} \lvert w_i\rvert \), which means we normalize the weights. As \(\Tilde w_i^2 \geq 0\), we have that \(g_{\theta}\) preserves common kernel properties such as positive definiteness. We also want to encourage sparsity in kernels and therefore we opt to regularize the outer parameters with \(L_1\)-regularization and the inner parameters with an \(L_2\)-regularization when performing dictionary learning, resulting in the following loss function 
\begin{align*}
    \mathcal{L}^{sk}_{(\beta_1, \beta_2)}(\theta) = \mathcal{L}^{sk}(\theta) + \beta_1 \sum_{i=1}^{M_g} \lvert \Tilde w_i \rvert + \beta_2 \sum_{i=1}^{M_g} \lVert \theta_i \rVert^2_2,
\end{align*}
where \(\beta_1 \in \mathbb{R}_{>0}\) and \(\beta_2 \in \mathbb{R}_{>0}\) are the regularization constants. A meaningful addition to the dictionary learning due to the regularization is that we can prune the kernel \(g_{\theta}\). That is, after a certain amount of training, evaluate the outer parameters and remove the kernel with the smallest value \(\lvert w_i \rvert\) (or remove all kernels below a certain threshold), and then continue the training with the remaining kernels. When continuing, one may either reset the parameters of the remaining kernels to their initialization value, or continue with the parameters found until that point. One can then either finish training, or again complete a certain amount of training, and then remove more kernels. Although we here apply the \(L_1\) norm when pruning, if a sparse kernel representation is the goal, one could also consider techniques closer to iteratively reweighted least squares (IRLS) to avoid the computational burden relating to the \(L_1\) norm \citep{daubechies-2010, owhadi-2019b}. We will apply pruning in \Cref{sec:exp2} and \Cref{sec:exp3}. 

\subsubsection{Batching, subsampling, and regularization}
A few additions are needed to make kEDMD-DL applicable and be computationally feasible. First, in line with most machine learning approaches, we have found that using stochastic gradient descent (SGD) rather than gradient descent improves the generalization (test) error. To use SGD, we specify a number of batches and divide the training points in \((X, Y)\) evenly across them. After performing one gradient step for each batch, known as an epoch, we divide the training points differently across the same number of batches. In our experiments so far, this has been beneficial to achieve convergence. Although not completely understood, we hypothesize that, because kernels have fewer parameters to train on and many kernels involve parameters inside an exponential function (such as the RBF kernel), this can lead to quite sharp and difficult loss landscapes. Through SGD, we effectively change the loss landscape for each batch, which can mitigate some problems, such as local minima.

Secondly, in the dictionary \(\Psi^{sk}_{\theta}\), defined in \Cref{eq:skedmd_dict}, we have \(N\) elements, one for each point in \(X\). The larger \(N\) is, the more computational resources are required. In addition to this, every point in every batch during training is then represented in the dictionary, and therefore in the Koopman approximation \(K_{sk}\). To both alleviate the computational burden, and promote generalization through the training, we subsample \(\Tilde N\) points from \(X\) and \(Y\) to create the sets \(\Tilde X \subseteq X\) and \(\Tilde Y \subseteq Y\), which we then use to compute \(K_{sk}\) following \Cref{eq:k_simplified}, i.e., 
\begin{align*}
    K_{sk} = g_{\theta}(\Tilde Y, \Tilde X) g_{\theta}(\Tilde X, \Tilde X)^+,
\end{align*}
where \(K_{sk} \in \mathbb{R}^{\Tilde N \times \Tilde N}\).

Thirdly, in addition to adding regularization to the parameters \(\theta\), we also have regularization when solving the least squares problem to acquire the approximation \(K_{sk}\) and the projection matrix \(C_p\) or the Koopman modes \(C_m\). Hence, we have four regularization constants, which appear manageable, as we have not needed much fine-tuning in our experiments for three of them. However, the crucial regularization constant \(\beta_{koop}>0\) used when computing \(K_{sk}\) requires a bit more attention. This is, if the initialization is particularly bad, the need for regularizing when computing \(K_{sk}\) is large, and we found that implementing a simple scheduler which decreased the regularization constant \(\beta_{koop}\) after a certain number of epochs has been quite beneficial. Otherwise, if we choose a small fixed \(\beta_{koop}\), the loss function might explode, and a large fixed \(\beta_{koop}\) might lead to plateauing of the loss.

\begin{minipage}{1.0\linewidth}
    \begin{algorithm}[H]
        \caption{Dictionary learning for kEDMD, using a parameterized kernel \(g_{\theta}\). As input it takes snapshots from the dynamical system \((X,Y)\), initial parameter value \(\theta_0\), learning rate \(\eta > 0\), regularization constant \(\beta_1, \beta_2, \beta_{modes} > 0\), in addition a \(\texttt{Scheduler}\) for the regularization constant \(\beta_{koop} >0\), and number of epochs.}\label{alg:dl_kedmd}
        \begin{algorithmic}
            \Procedure{DL-EDMD}{$X$, $Y$, $\theta_0$, $\eta$, $\beta_1$, $\beta_2$, $\beta_{modes}$, $\texttt{Scheduler}$, $N_{\text{epochs}}$}
            \State $\Tilde X, \Tilde Y \leftarrow \text{Subsample}(X,Y)$
            \State $\theta \leftarrow \theta_0$
            \For{$e=1,2,\dots, N_{\text{epochs}}$}
                \State $\beta_{koop} \rightarrow \texttt{Scheduler}(e)$
                \For{$(X_b,Y_b) \in \texttt{Batch}(X,Y)$}
                    \State $K \leftarrow g_{\theta}(\Tilde Y, \Tilde X) (g_{\theta}(\Tilde X, \Tilde X) + \beta_{koop} I)^+$
                    \State $C_p \leftarrow \Tilde X\tran (g_{\theta}(\Tilde X, \Tilde X) + \beta_{modes} I)^+$
                    \State $\theta = \theta - \eta \frac{\partial}{\partial \theta} (\lVert Y_b\tran - C_pK g(X_b, \Tilde X)\rVert^2_F +  \beta_1\sum_{i=1}^{M_g}  \lvert \Tilde w_i\rvert + \beta_2 \lVert \theta_i\rVert^2_2)$
            \EndFor
            \EndFor
            \State Return $\theta$
            \EndProcedure
    \end{algorithmic}
    \end{algorithm}
\end{minipage}

Finally, to stabilize the losses in the dictionary space, i.e., the dictionary loss \(\mathcal{L}_{dict}^{sk}\) and eigen loss \(\mathcal{L}_{eig}^{sk}\), we divide by the integral of the dictionary, and redefine the losses as
\begin{align*}
   \mathcal{L}_{dict}^{sk}(\theta) = \frac{\lVert \Psi^{sk}_{\theta}(Y) - K_{sk} \Psi^{sk}_{\theta}(X)\rVert_F^2}{(\int_{\mathcal{X}} \lVert \Psi_{\theta}\rVert d\mu)^2},\quad 
   \mathcal{L}_{eig}^{sk}(\theta) = \frac{\lVert \Phi^{sk}_{\theta}(Y) - \Lambda \Phi^{sk}_{\theta}(X) \rVert_F^2}{(\int_{\mathcal{X}} \lVert \Psi_{\theta}\rVert d\mu)^2}.
\end{align*}
We naturally do not have access to the integral, and rather use a certain number of points from \((\Tilde X, \Tilde Y)\) and compute an estimate based on this. 

This completes the full algorithm we propose for dictionary learning for kEDMD and is summarized in \Cref{alg:dl_kedmd}, using only \(\mathcal{L}_{pred}^{sk}\) as the loss function. We now proceed to empirically evaluate the algorithm's performance.

\section{Experiments}\label{sec:experiments}
In this section, we will evaluate empirically the algorithm kEDMD-DL, proposed in \Cref{sec:method}. We start by considering a single RBF-kernel in \Cref{sec:exp1} and compare the loss functions for the original kEDMD and the simplified kEDMD. In \Cref{sec:exp2}, we evaluate whether or not the algorithm can identify the good kernels among several kernels. In \Cref{sec:exp3} we consider the Kuramoto-Sivashinsky PDE and perform dictionary learning over a larger sum of kernels to see how the kEDMD-DL algorithm performs on a more difficult example. In \Cref{tab:hyperparameters} we list the relevant hyperparameters and the values used for each of the three experiments we ran.
\begin{table}[ht]
    \caption{The hyperparameters chosen for each experiment. If there are more than one value given under the Koopman regularization, it means we have used a scheduler; exactly when the change of regularization value occur is given in the text of the corresponding experiment.}\label{tab:hyperparameters}
    \centering
    \begin{tabular}{llll}
        \toprule
        Hyperparameters & Duffing& Modulo & KS \\
        \midrule
        Learning rate  & 5e-4 & 1e-1 & 5e-3 \\
        Number of epochs & 20 & 15 & 200 \\
        Size of \(K\) (\(\Tilde N\)) & 40 & 40 & 100 \\
        Koopman regularization & 1e-8 & 1e-6, 1e-8 & 1e-4, 1e-8 \\
        Modes regularization & 1e-8 & 1e-8 & 1e-8 \\
        $L^1$-regularization & 1e-8 & 1e-8 & 1e-8 \\
        $L^2$-regularization & 1e-8 & 1e-8 & 1e-8 \\
        \bottomrule
    \end{tabular}
\end{table}

\subsection{Dictionary learning using kernels with one parameter}\label{sec:exp1}
In the first experiment we consider the Duffing oscillator, where the evolution of \(\vx = [x_1, x_2]\) is described through the two differential equations,
\begin{align*}
    \dot{x_1} &= x_2 \\
    \dot{x_2} = -\delta_1 x_2 - &\delta_2 x_1 - \delta_3 x_1^3.
\end{align*}
In this experiment we set \((\delta_1, \delta_2, \delta_3) = (0.5, -1.0, 1.0)\), which gives us a system with two stable steady states. When generating data, we sample initial conditions from the cube \([-2,2]^2\). We draw 100 initial conditions for the training set, and evolve each them for 10 steps from \(t=0.0\) to \(t=1.0\), leaving us with 500 training points in total, which we then divide into five batches. In addition we create a test set with 100 initial conditions, leaving us with 1000 test points. For the size of the dictionary, we set \(\Tilde N = 40\), i.e., we sample 40 training points at the beginning of training to create \((\Tilde X, \Tilde Y)\). Following the hyperparameters given in \Cref{tab:hyperparameters}, we will train the radial basis function (RBF) kernel, with its sole parameter \(\sigma\), 
\begin{align*}
    g(\vx,\vy) = \exp\left(-\frac{\lVert \vx-\vy\rVert^2}{2\sigma^2}\right),
\end{align*}
where we initialize the parameter to \(\sigma = 1000\), making the behavior of the kernel close to that of a linear kernel. After the training we end up with \(\sigma = 6.34\). 
\begin{figure}[ht]
    \centering
    \begin{subfigure}{0.3\textwidth}
        \includegraphics[width=\linewidth]{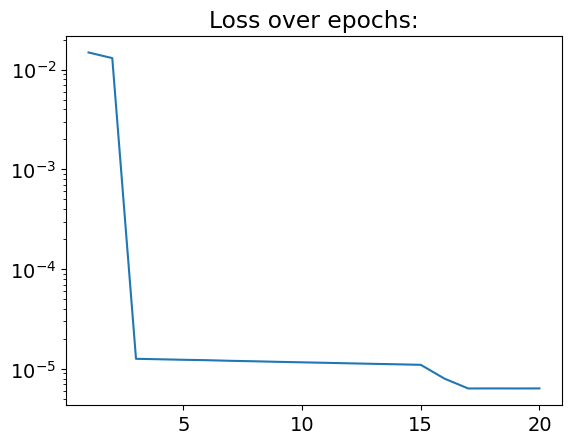}
    \end{subfigure}
    \hfill
    \begin{subfigure}{0.3\textwidth}
        \includegraphics[width=\linewidth]{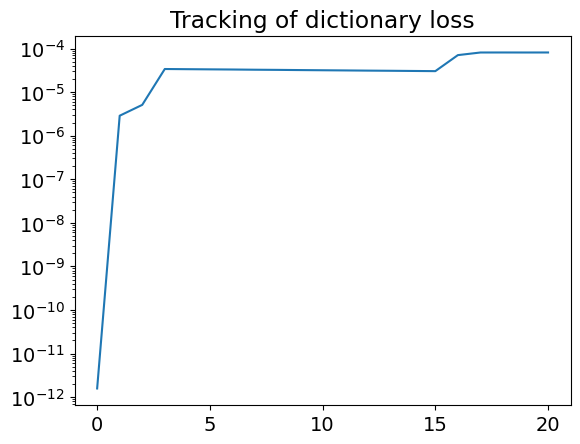}
    \end{subfigure}
    \hfill
    \begin{subfigure}{0.3\textwidth}
        \includegraphics[width=\linewidth]{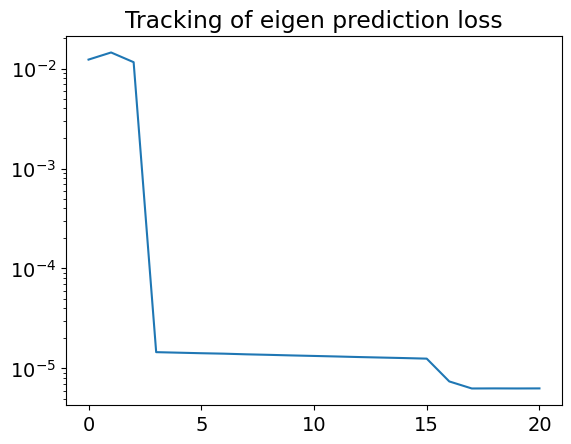}
    \end{subfigure}
    \caption{Different loss functions (vertical axes) measured over training epochs (horizontal axes), for the Duffing oscillator. On the left we have \(\mathcal{L}_{pred}\), which is also used as the loss function in \Cref{alg:dl_kedmd}. In the middle we have the dictionary loss \(\mathcal{L}_{dict}\). On the right we have the eigen prediction loss \(\mathcal{L}_{eig-pred}\).}
    \label{fig:duffing_loss}
\end{figure}

In \Cref{fig:duffing_loss}, we calculate the mean loss over all five batches for each epoch. We observe that the prediction loss and eigen prediction loss agree, and decrease rapidly, before somewhat stabilizing around \(10^{-5}\). On the other hand, the dictionary loss is very low from the start, with the first point evaluated before any gradient steps have been taken. If we used the dictionary loss, the parameter would remain fairly stable at the initial \(\sigma=1000\). We also observe a similar effect when tracking \(\mathcal{L}_{eig}\) as when tracking \(\mathcal{L}_{dict}\). This shows the need for a different loss function, such as the prediction loss. Although one should also mention that this effect of a very low starting loss for \(\mathcal{L}_{dict}\) might appear more often when we only have one or a few parameters to optimize over. Another aspect of \Cref{fig:duffing_loss} is the behavior of little to no change, before rapidly decreasing, as we see in \(\mathcal{L}_{pred}\). This might partially be due to optimizing over a single parameter within an exponential function. Informally, the window where the kernel has a significant effect on a certain point depends on the distance to the points in the dataset \((\Tilde X, \Tilde Y)\), and exactly how close they need to be depends on \(\sigma\). Hence, a small change in \(\sigma\) may have a very small or very large effect, which again makes the loss landscape more rugged. This effect can be somewhat mitigated by choosing different kernels or, even better, adding more kernels to a weighted sum, thereby increasing the dimensionality of the parameter space. An increased dimensionality can, in fact, have a beneficial effect on the loss landscape, as is commonly observed in machine learning \citep{cooper-2021, hastie-2022}.

We then move on to consider the trajectories computed from the test set's initial conditions. In \Cref{fig:duffing_trajs}, we see the trajectories computed using both the kernel with initialization parameter \(\sigma=1000\) and the kernel learned via dictionary learning. The parameter found by kEDMD-DL captures the underlying dynamics well, even when training starts with a kernel that clearly does not.
\begin{figure}[ht]
    \centering
    \includegraphics[width=\linewidth]{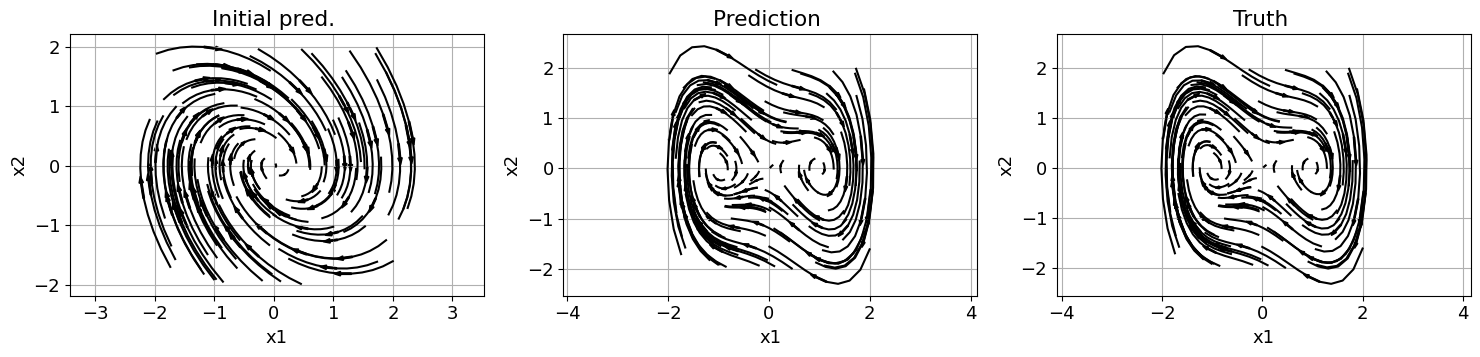}
    \caption{The trajectories of the test set, where on the left we see the trajectories computed from the initial conditions using the kernel with the initial parameter \(\sigma=1000\), in the middle is the prediction using the kernel found through dictionary learning, and on the right we see true trajectories.}
    \label{fig:duffing_trajs}
\end{figure}
\begin{figure}[ht]
    \centering
    \begin{subfigure}{0.35\textwidth}
        \includegraphics[width=\linewidth]{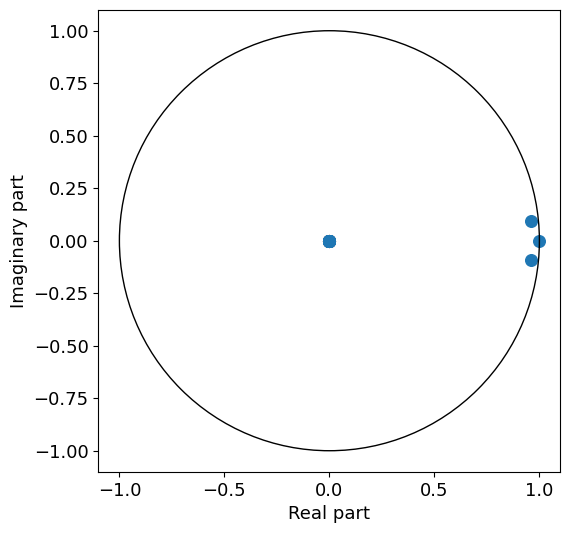}
    \end{subfigure}
    \hspace{1cm}
    \begin{subfigure}{0.35\textwidth}
        \includegraphics[width=\linewidth]{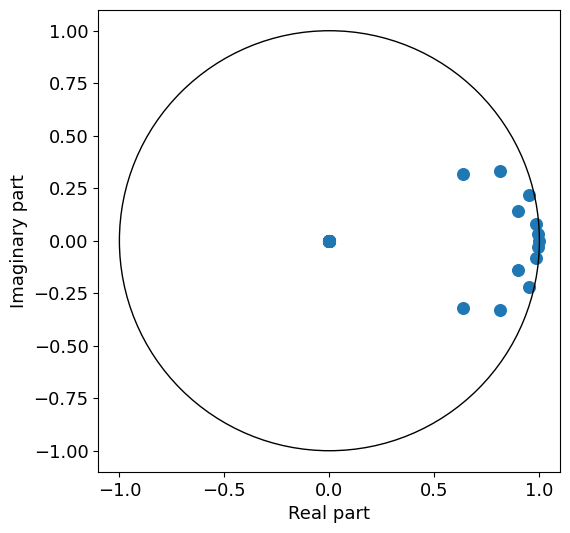}
    \end{subfigure}
    \caption{The eigenvalues of the Koopman approximation for the Duffing oscillator, using two different kernels. The plot on the left shows the eigenvalues before training, i.e., the RBF kernel with \(\sigma=1000\). The plot on the right shows the eigenvalues found after training. The x- and y-axis correspond to the real and imaginary part of the eigenvalue, respectively.}
    \label{fig:duffing_spect}
\end{figure}
If we then consider the eigenvalues we learn and compare it with the initial kernel, which we have plotted in \Cref{fig:duffing_spect}, we see that we have many more distinct nonzero eigenvalues after performing dictionary learning as opposed to beforehand. This also gives us an insight into why using dictionary based losses, such as dictionary loss and eigen loss, alone is not always the right choice. Finally, the two different plots also shows that we start and end with quite well-behaved kernels, that is, kernels which do not lead to eigenvalues far outside the unit circle.

\subsubsection{Comparing with the original kEDMD}
When we trained the kernel, in addition to tracking all the loss functions above, we also computed the loss functions based on the original kEDMD. That is, for every batch, we computed \(K_{tr}\) using the same kernel and parameter value \(\sigma\) as when we tracked the different loss functions in \Cref{fig:duffing_loss}. The resulting plots can be found in \Cref{fig:duffing_loss_trunc}, and show similar behavior for the eigen prediction loss as with the simplified kEDMD version. The eigen loss also shows the unwanted behavior that we sometimes observe with these dictionary-based loss functions. In contrast, the dictionary loss derived in \Cref{sec:loss_trunc_kedmd} shows a different, slightly more beneficial behavior (beneficial as in more suitable to be used in a dictionary learning scheme). It is hard to pinpoint exactly why this happened here, but perhaps part of the behavior stems from failing to satisfy the full-rank assumption. In general, if one were to apply dictionary learning to the original kEDMD, an eigen prediction loss \(\mathcal{L}^{tr}_{eig-pred}\) seems to be the way to go, as we do not have access to a prediction loss.

\begin{figure}[ht]
    \centering
    \begin{subfigure}{0.3\textwidth}
        \includegraphics[width=\linewidth]{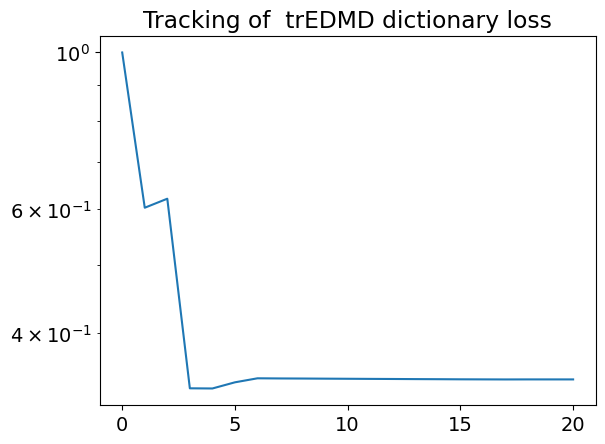}
    \end{subfigure}
    \hfill
    \begin{subfigure}{0.3\textwidth}
        \includegraphics[width=\linewidth]{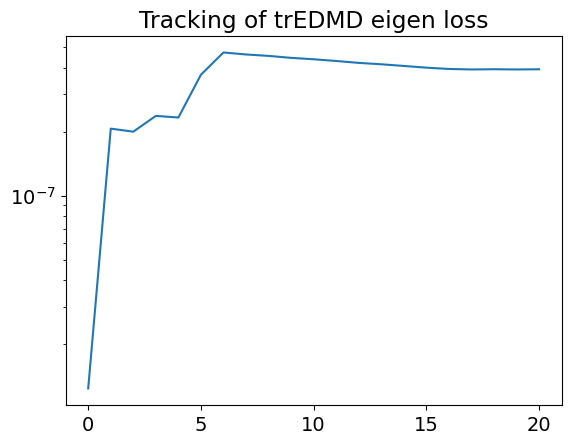}
    \end{subfigure}
    \hfill
    \begin{subfigure}{0.3\textwidth}
        \includegraphics[width=\linewidth]{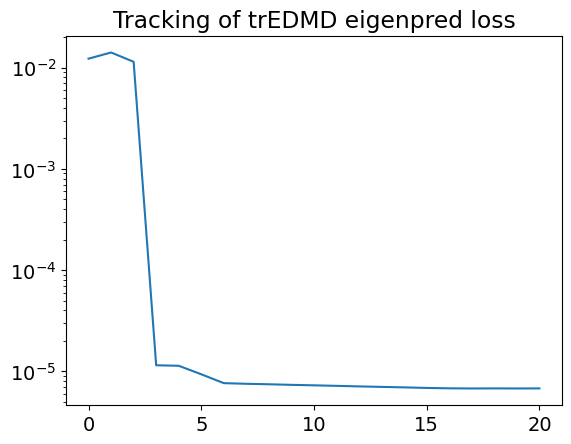}
    \end{subfigure}
    \caption{Different loss functions (vertical axes) measured for each epoch (horizontal axes), using the current kernel found by \Cref{alg:dl_kedmd} to compute \(K_{tr}\). On the left we show the dictionary loss \(\mathcal{L}^{tr}_{dict}\), in the middle the eigen loss \(\mathcal{L}^{tr}_{eig}\), and on the right the eigen prediction loss \(\mathcal{L}^{tr}_{eig-pred}\).}
    \label{fig:duffing_loss_trunc}
\end{figure}

If we then, as a sanity check, consider the trajectories in \Cref{fig:duffing_trajs_trunc}, which we achieve by passing the kernel with the parameter at initialization and the parameter after training to the original kEDMD to compute \(K_{tr}\), we observe similar behavior as in \Cref{fig:duffing_trajs}. This is also seen in the case of the eigenvalues, plotted in \Cref{fig:duffing_spect_trunc}. 
\begin{figure}[ht]
    \centering
    \includegraphics[width=\linewidth]{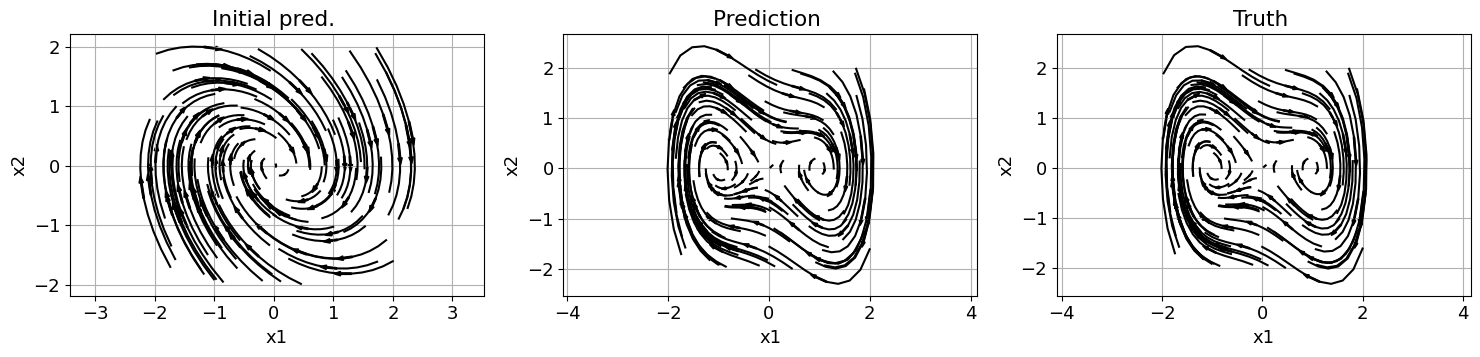}
    \caption{The trajectories of the test set, where on the right we see the true trajectories. Using the original kEDMD, on the left we see the trajectories computed from the initial conditions using the kernel with the initial parameter \(\sigma=1000\), and in the middle is the prediction using the kernel found through dictionary learning.}
    \label{fig:duffing_trajs_trunc}
\end{figure}
\begin{figure}[ht]
    \centering
    \begin{subfigure}{0.35\textwidth}
        \includegraphics[width=\linewidth]{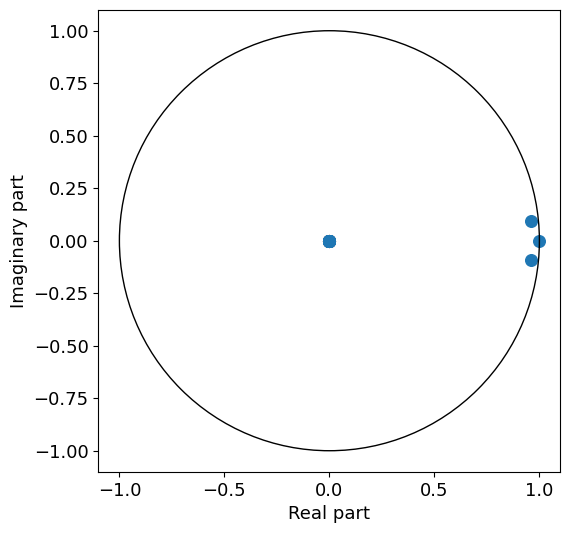}
    \end{subfigure}
    \hspace{1cm}
    \begin{subfigure}{0.35\textwidth}
        \includegraphics[width=\linewidth]{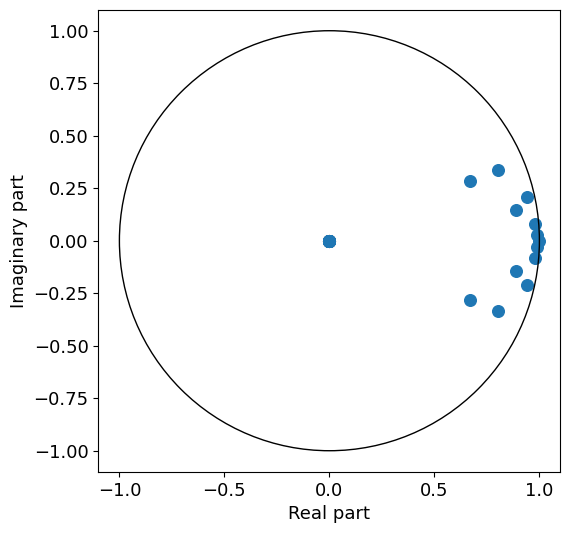}
    \end{subfigure}
    \caption{The eigenvalues of \(K_{tr}\) computed using the kernel with initialization parameter on the left, and similarly using the parameter found through dictionary learning on the right.}
    \label{fig:duffing_spect_trunc}
\end{figure}

\subsection{Identifying the correct kernel}\label{sec:exp2}
We will now start adding kernels to create a weighted sum as in \Cref{eq:weight_sum}. One goal of kEDMD-DL is to identify the useful kernels in a sum. We therefore want the training to not only find suitable inner parameters, but also distinguish the kernels through the outer parameters. In a typical setting, it is not easy to identify pre-training what a good kernel looks like. In this experiment, we construct a dynamical system and a list of kernels, where some kernels are clearly more useful than others. We start by considering a system described by
\begin{align}\label{eq:modulo_sys}
    x_{t+1} = (x_{t} + \omega) \text{ mod } 2\pi,
\end{align}
where \(\omega \in \mathbb{R}\). This system can be seen as an angle of a circle, where at each time \(t\), \(\omega\) is added to the previous angle \(x_{t}\). While mapping it to a point on a circle leaves us with a dynamical system that is easy to capture, working in the Euclidean space \(\mathbb{R}\) is more challenging, as the state \(x_{t}\) will suddenly jump. For our specific setting, we set the \(\omega = 1.1\pi\). We draw 100 initial conditions uniformly from \([0, 2\pi)\), evolving them 50 time steps ahead, leaving us with 5000 data points in the training set, where we divide them into five batches. For the subsampling, we set \(\Tilde N = 40\), and create the following kernel
\begin{align*}
    g_{\theta}(x,y) = \Tilde w_{1}^2 \exp\left(-\frac{\lVert h(x)-h(y)\rVert^2_2}{2\sigma_{1}^2}\right) + \Tilde w_{2}^2 \exp\left(-\frac{\lvert x-y\rvert^2}{2\sigma_{2}^2}\right) + \Tilde w_3^2 \cos(a \lvert x-y \rvert^2) + \Tilde w_4^2 (c_1^2 xy),
\end{align*}
where \(h\) creates a 2-dimensional vector from the input point, i.e., \(h(x) = [\cos(x), \sin(x)]\). Although the linear kernel, the RBF kernel, and the cosine kernel may be somewhat useful, their performance  is negatively affected by the discontinuous jump when \((x_t + \omega) > 2\pi\), while the embedding created by \(h\) erases the discontinuity. So after the training, the key part we look for is the outer parameters and the relationship between the three kernels. We initialize the outer parameters to \(w_1=w_2=w_3=w_4=0.25\), and after performing dictionary learning we end up with \((w_1, w_2, w_3, w_4) = (-6.040, -0.925, -0.023, 0.876)\); hence the kernel with the embedding is strongly emphasized. Looking at the eigenvalues in \Cref{fig:modulo_spectrum}, we find that the initial kernel is particularly inaccurate, with several eigenvalues well outside of the unit circle. This is also why we use a scheduler for the Koopman regularization, changing its value after five steps. After training, we obtain an improved collection of eigenvalues. In particular, for the system described by \Cref{eq:modulo_sys}, the eigenvalues we want to approximate are of the form \(\exp(i\omega j)\), \(j\in\mathbb{Z}\). If we take it a step further, based on the values of the outer parameters, and prune the kernel \(g_{\theta}\) by setting \(w_2=w_3=w_4=0.0\), we achieve the plot on the right, where the eigenvalues are either zero or on the unit circle. We also plot the first 20 true eigenvalues of the system, that is, we plot \(\exp(i\omega j)\) for \(j=1,2,\dots,20\). Notice that the 9 nonzero eigenvalues are each exactly the same as a true value (both the real and imaginary component are within 1e-3 of a true eigenvalue). More specifically, the nonzero eigenvalues of the approximation are exactly \(\exp(i\omega j)\), for \(j=0,1,2,3,4,16,17,18,19\). To find more nonzero eigenvalues, we would need to increase \(\Tilde N\). Still, this shows that the algorithm presented seems to find valuable kernels, and we will use it for a more challenging problem next.

\begin{figure}[ht]
    \centering
    \begin{subfigure}{0.27\textwidth}
        \includegraphics[width=\linewidth]{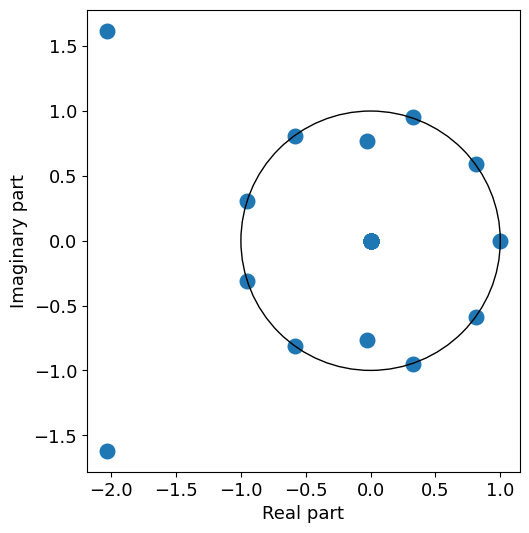}
    \end{subfigure}
    \hfill
    \begin{subfigure}{0.3\textwidth}
        \includegraphics[width=\linewidth]{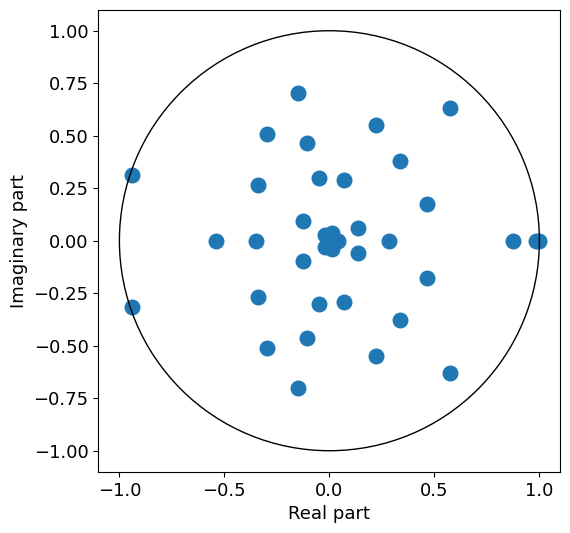}
    \end{subfigure}
    \hfill
    \begin{subfigure}{0.3\textwidth}
        \includegraphics[width=\linewidth]{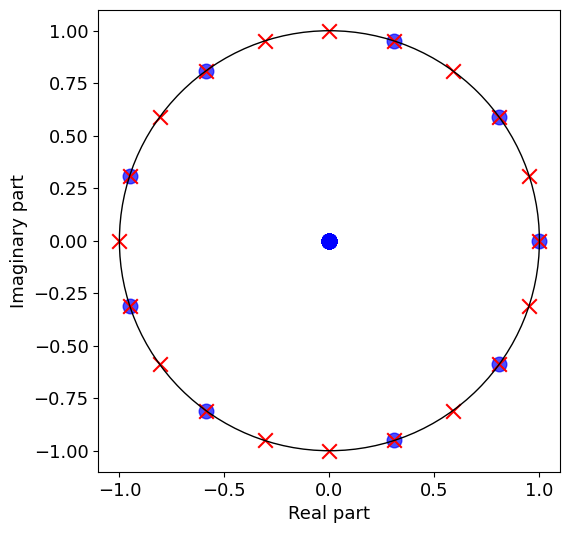}
    \end{subfigure}
    \caption{Eigenvalues of \(K_{sk}\), where on the left we have before training, in the middle after training, and on the right is after pruning, i.e., after setting \(w_{2}=w_{3}=w_4=0.0\). On the right plot we also plot the first 20 true eigenvalues, i.e., we plot \(\exp(i\omega j)\) for \(j=1,2,\dots,20\). The true values are marked by red cross, while the approximated ones are the blue dots.}
    \label{fig:modulo_spectrum}
\end{figure}

\subsection{Kuramoto-Sivashinsky equation}\label{sec:exp3}
To test the kEDMD-DL algorithm for a larger sum of kernels, we opt to work with a system defined by the Kuramoto-Sivashinsky equation (KSE), namely
\begin{align*}
    u_t + 4u_{xxxx} + \gamma (u_{xx} + uu_x) = 0, 
\end{align*}
where \(x \in [0,2\pi]\). We follow the setup of \citet{li-2017} and set \(\gamma = 16\) and with periodic boundary condition. For the initial condition we let
\begin{align*}
    u(x,0) = \tau_1 \sin(2\pi x) + \tau_2 \exp(\cos(2\pi x)),
\end{align*}
where \(\tau_1 \in [0.8, 1]\) and \(\tau_2 \in [0.5,1]\). We generate training data by uniformly sampling 100 pairs of points \((\tau_1, \tau_2)\), and from there generate 100 initial conditions evaluated at 50 equally spaced points \(0 \leq x_1 < \dots < x_{50} \leq 2\pi\). We then compute 100 timesteps ahead, from \(t=0\) to \(t=0.5\). We similarly sample 10 pairs of points \((\tau_1, \tau_2)\) and generate our test data in the same fashion. We use 4 batches.

For our kernel selection, we choose to add a few RBF kernels, one cosine kernel, one linear kernel, and a neural network Gaussian process (NNGP) kernel. For the latter, we briefly introduce the latter, as the reader might not be as familiar with this kernel. If you have a neural network with Gaussian initialization and take the limit as the number of hidden-layer neurons approaches infinity, the output neurons follow a Gaussian process (GP). For a certain activation function, one even has a closed-form solution for the kernel that defines the GP, which is then the NNGP kernel. For more on NNGP, we refer the reader to \citet{rasmussen-2005}, \citet{avidan-2024}, and \citet{lee-2018}. We base our kernel on the closed-form solution of an NNGP kernel given by \citet{lee-2018}, which yields the NNGP kernel with ReLU activation function. We start by letting
\begin{align*}
    g^{0}_{(b_1, b_2)}(\vx,\vy) = b_1^2 \frac{\langle \vx, \vy\rangle}{d} + b_2^2,
\end{align*}
where \(d\) is the dimension of the input, which in our case is \(50\). We then let
\begin{align*}
    g^{NNGP}_{(b_1, b_2)}(\vx, \vy) &= b_2^2 + \frac{b_1^2}{2\pi} \sqrt{g^0_{(b_1,b_2)}(\vx) g^0_{(b_1,b_2)}(\vy)}
    \left(\sin \omega_{\vx, \vy} + (\pi - \omega_{\vx,\vy})\cos\omega_{\vx,\vy}\right)\\
    \omega_{\vx,\vy} &= \cos^{-1}\left(\frac{g^0_{(b_1, b_2)}(\vx,\vy)}{\sqrt{g^0_{(b_1, b_2)}(\vx) g^0_{(b_1, b_2)}(\vy)}}\right).
\end{align*}
Including this kernel not only allows for a possibly better fit but also nicely connects kEDMD-DL to EDMD-DL by \citet{li-2017}, as in EDMD-DL, one often uses neurons as dictionary elements. 

\begin{figure}[ht]
    \centering
    \begin{subfigure}{0.3\textwidth}
        \includegraphics[width=\linewidth]{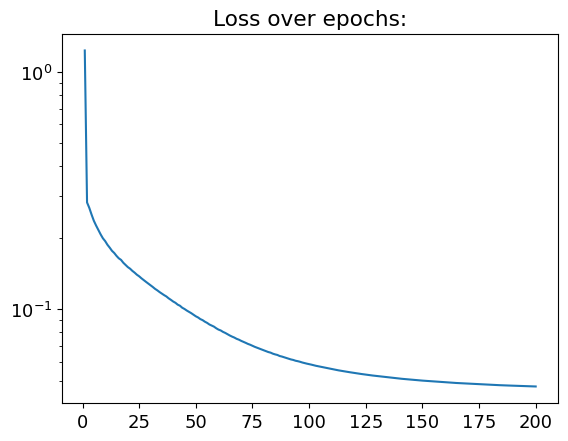}
    \end{subfigure}
    \hfill
    \begin{subfigure}{0.3\textwidth}
        \includegraphics[width=\linewidth]{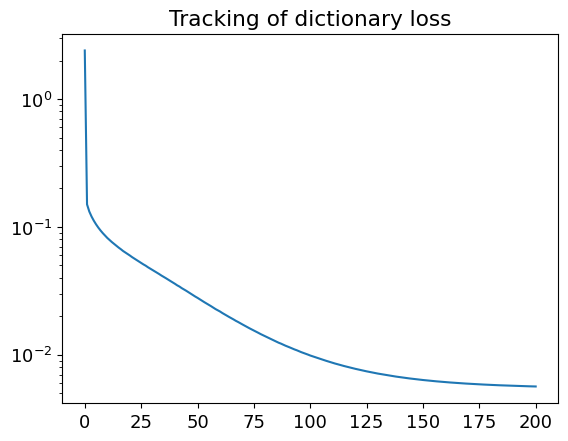}
    \end{subfigure}
    \hfill
    \begin{subfigure}{0.3\textwidth}
        \includegraphics[width=\linewidth]{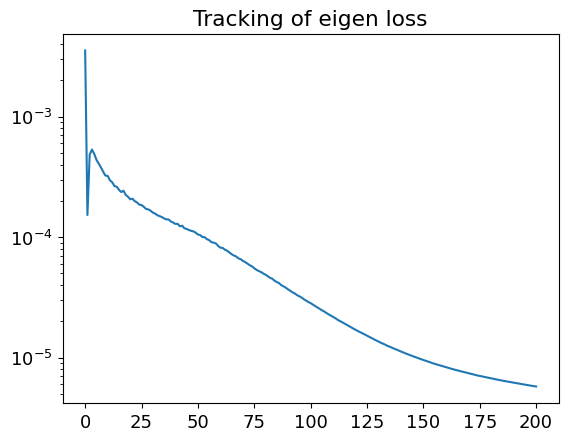}
    \end{subfigure}
    \caption{Different loss functions measured for each epoch for the KS PDE. On the left we have \(\mathcal{L}_{pred}\). In the middle we have the dictionary loss \(\mathcal{L}_{dict}\). On the right we have the eigen loss \(\mathcal{L}_{eig}\).}
    \label{fig:kse_loss}
\end{figure}

\begin{figure}[ht]
    \centering
    \begin{subfigure}{0.90\textwidth}
        \includegraphics[width=\linewidth]{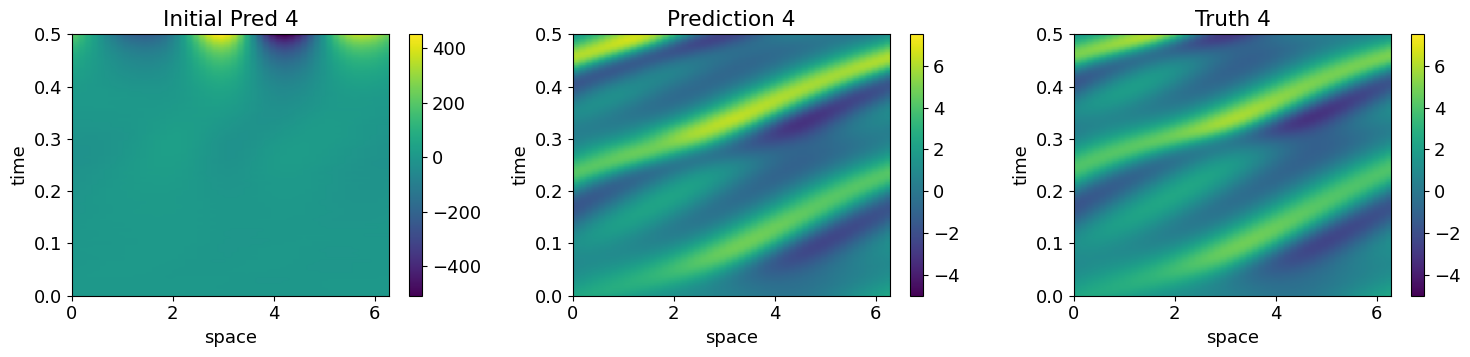}
    \end{subfigure}
    \begin{subfigure}{0.90\textwidth}
        \includegraphics[width=\linewidth]{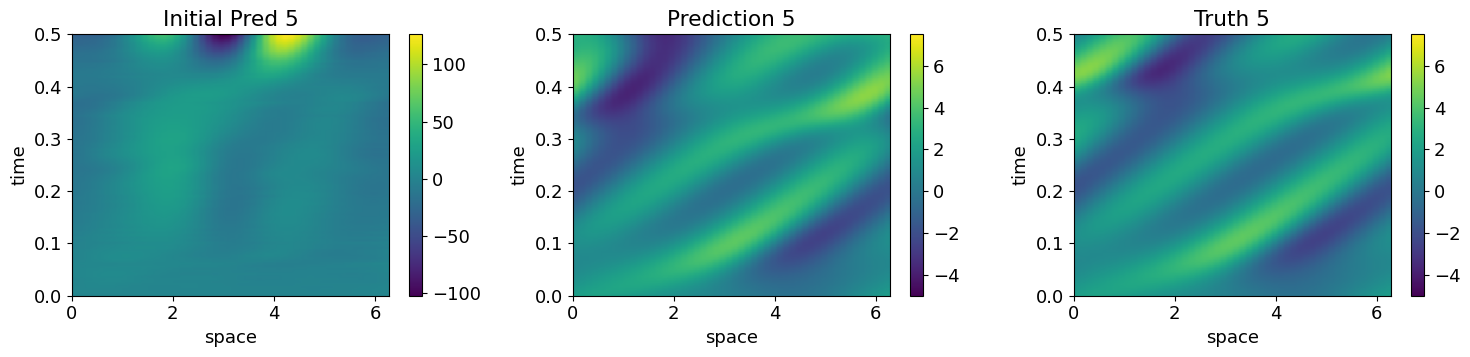}
    \end{subfigure}
    \begin{subfigure}{0.90\textwidth}
        \includegraphics[width=\linewidth]{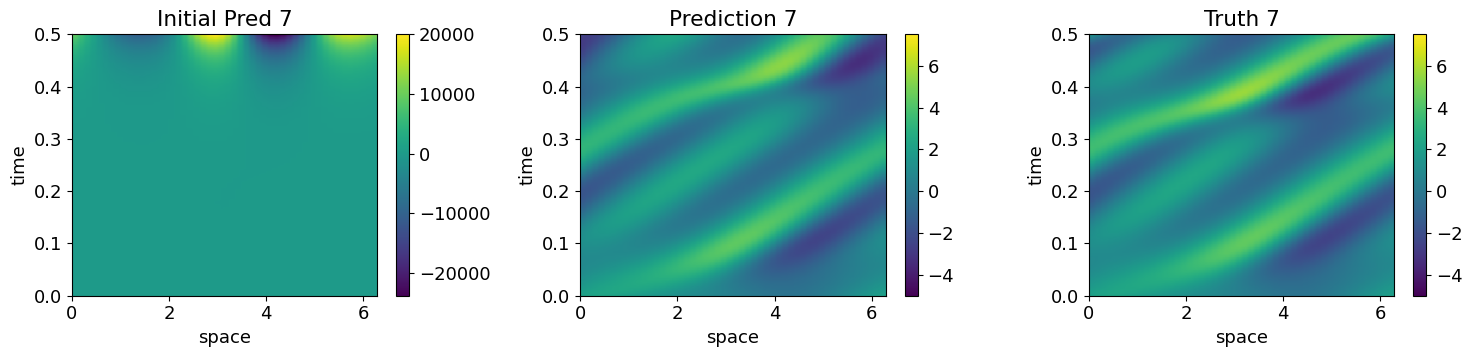}
    \end{subfigure}
    \caption{Approximating the trajectories of the Kuramoto-Sivashinsky PDE for three chosen initial conditions in the test set. On the left is the approximated trajectories using the kernel with initialization parameters, in the middle is the approximated trajectories from skEDMD after dictionary learning, and on the right is the true trajectories.}
    \label{fig:kse_test}
\end{figure}

\begin{figure}[ht]
    \centering
    \begin{subfigure}{0.55\textwidth}
        \includegraphics[width=\linewidth]{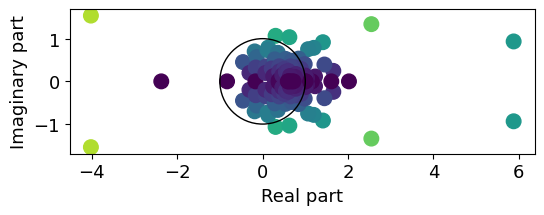}
    \end{subfigure}
    \hspace{1cm}
    \begin{subfigure}{0.35\textwidth}
        \includegraphics[width=\linewidth]{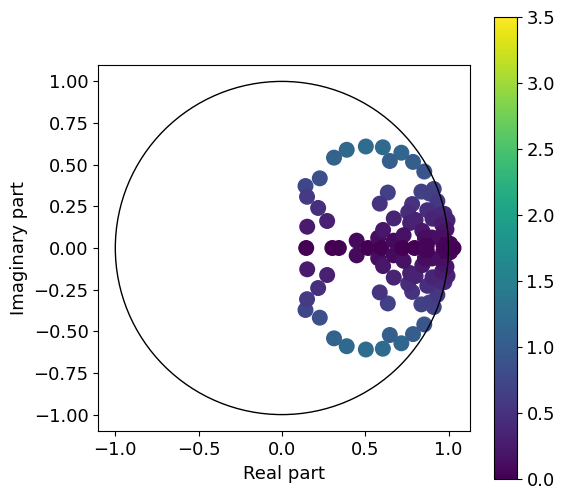}
    \end{subfigure}
    \caption{The eigenvalues of \(K_{sk}\) when approximating the Kuramoto-Sivashinsky PDE, computed using kernel with initialization parameter on the left, and similarly for the parameter found through dictionary learning on the right. The corresponding color of each point is the eigenvalue's residual, computed following \Cref{sec:res_spec}. The minimum, median, and maximum value of the residuals for the initial kernel and after training are \((\text{2.03e-7}, 0.60, 3.08)\) and \((\text{2.24e-7}, 0.34, 1.22)\), respectively.}
    \label{fig:kse_spectrum}
\end{figure}

Having defined our \(g^{NNGP}_{(b_1,b_2)}\), we set our kernel to be
\begin{align*}
    g_{\theta}(\vx,\vy) =  \sum_{i=1}^{3} \Tilde w_{i}^2 \exp\left(-\frac{\lVert \vx-\vy \rVert^2_2}{2\sigma_i^2}\right) + \Tilde w_4^2 \cos(a \lVert \vx-\vy \rVert^2_2) + \Tilde w_5^2 (c_1^2 \langle \vx, \vy\rangle) + \Tilde w_6^2 g^{NNGP}_{(b_1, b_2)}(\vx, \vy), 
\end{align*}
where we initialize \((\sigma_{1}, \sigma_2, \sigma_3) = (1,10, 50)\). The reason for having several RBF kernels with different parameter values is that changing the inner parameter by a significant amount inside an exponential can be challenging, as we already discussed in \Cref{sec:exp1} and also observed for general kernel learning in \citet{lengyel-2025}. So, we use a few kernels with different inner parameters and adjust them slightly, while leaving the more significant changes to the outer weights. We also initialize \((b_1, b_2, c_1, c_2, a) = (1, 0, 1, 0, 1)\), and the outer parameters \(w_i = 0.25\) for all \(i=1,2,\dots,6\). We set \(\Tilde N = 100\), slightly larger than in the previous two experiments, to accommodate the harder problem at hand, and then train for 200 epochs. The loss after training is shown in \Cref{fig:kse_loss}. The crucial part of making the training work is the scheduler. Namely, we start by regularizing \(K_{sk}\) significantly by setting the regularization constant to \(1e-4\). After a few epochs, we can already see improvement in the loss, and can then set the constant to \(1e-8\) for the remaining epochs. Without such a scheduler, the loss either blows up with a too small regularization constant, or it stops decreasing after a few epochs. It will become apparent soon why the loss will not behave nicely with a small regularization constant from the start. Another way to avoid this is to carefully select kernels and initialization values, which goes against the motivation behind dictionary learning of quickly initializing kernels without too much thought and letting the data choose the kernels and parameters for you. We also notice from \Cref{fig:kse_loss} that the dictionary-based losses are more aligned with the prediction-based loss. Compared to \Cref{sec:exp1}, this is quite different. We believe one reason for this is that we have more kernels and parameters to optimize over, and optimizing over outer parameters is significantly easier than some of the inner parameters.

After the training, we may consider first the outer parameters. We end up with \(w_1=0.267, w_2=1.149, w_3=0.356, w_4=-0.0273, w_5=0.210, w_6=1.691\), and from this we observe two kernels that the training emphasize more, namely the RBF kernel with initial value \(\sigma_2 = 10\) (which has changed to \(\sigma_2=5.96\)) and the NNGP kernel with new inner parameters \(b_1=2.57, b_2=0\).

Looking at three of the ten trajectories in the test set in \Cref{fig:kse_test}, we see the dictionary learning has moved from an unstable kernel towards a kernel that captures the trajectories satisfactorily, although there is still some inaccuracies when the time goes toward \(0.5\). For instance in the corner of the middle example there are still some inaccuracies. This instability of the initialization is also highlighted by the eigenvalues in \Cref{fig:kse_spectrum}, with the learned kernel producing eigenvalues mostly inside the unit circle. For these eigenvalues, we also opted to differentiate the different eigenvalues by their residual, computed following \Cref{sec:res_spec} as we are using skEDMD, and can apply algorithms designed for EDMD directly. After training, the number of eigenvalues with high residuals is lower, and therefore the eigenpairs produced after training are a more accurate representation of the operator's true eigenvalues. The eigenvalues after training also have lower overall residuals. There are still some eigenvalues that are clearly higher in residual value than others, particularly those where the magnitude of the imaginary part is larger. However, removing eigenpairs at any threshold after training has not yielded more accurate predictions; hence, we have computed trajectories using the full eigenspace.

\begin{figure}[ht]
    \centering
    \begin{subfigure}{0.90\textwidth}
        \includegraphics[width=\linewidth]{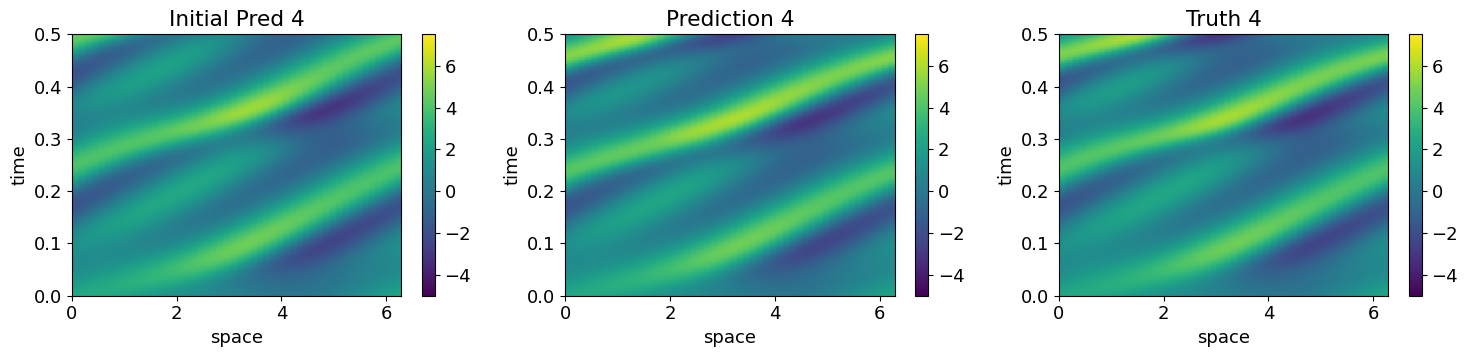}
    \end{subfigure}
    \begin{subfigure}{0.90\textwidth}
        \includegraphics[width=\linewidth]{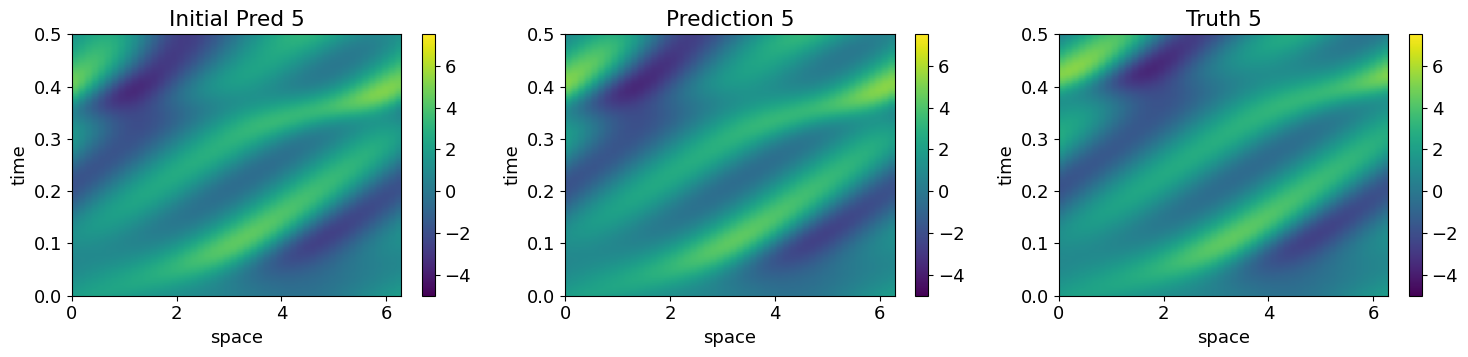}
    \end{subfigure}
    \begin{subfigure}{0.90\textwidth}
        \includegraphics[width=\linewidth]{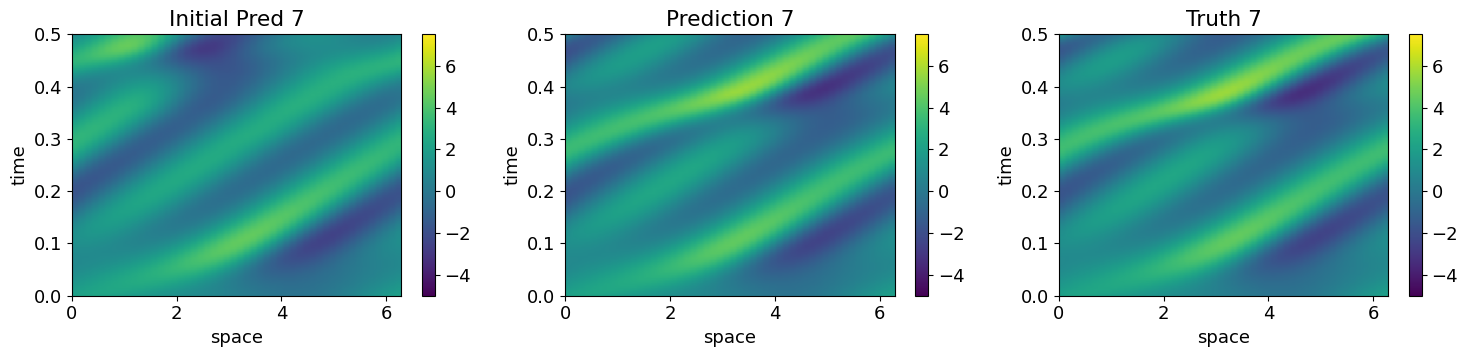}
    \end{subfigure}
    \caption{Approximating the trajectories of the Kuramoto-Sivashinsky PDE for three chosen initial conditions in the test set, after pruning the kernel. On the left is the approximated trajectory using the kernel with initialization parameters, in the middle is the approximated trajectory from kEDMD after dictionary learning, and on the right is the true trajectory.}
    \label{fig:kse_test_pruned}
\end{figure}

\begin{figure}[!ht]
    \centering
    \begin{subfigure}{0.6\textwidth}
        \includegraphics[width=\linewidth]{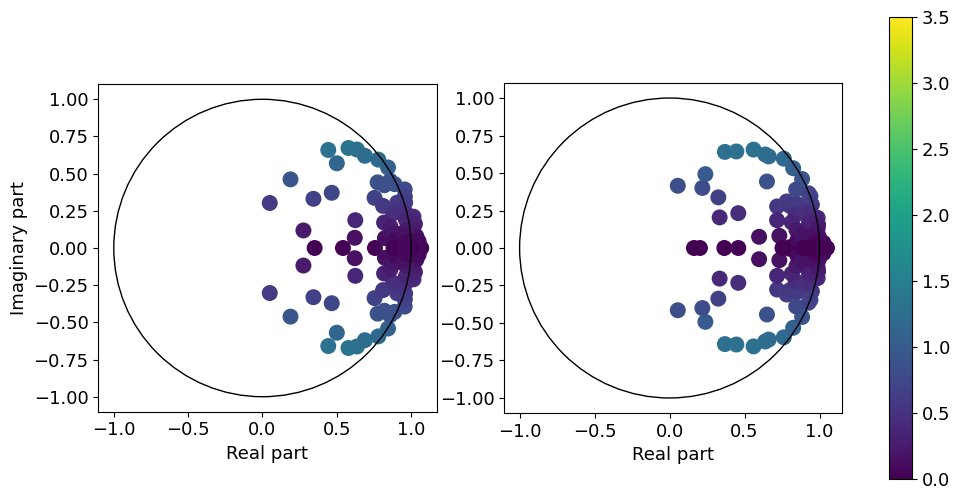}
    \end{subfigure}
    \caption{The eigenvalues of \(K_{sk}\) when approximating the Kuramoto-Sivashinsky PDE after pruning the kernel based on the first iteration of training. Computed using kernel with initialization parameter on the left, and similarly for the parameter found through dictionary learning on the right. The corresponding color of each point is the eigenvalue's residual, computed following \Cref{sec:res_spec}. The minimum, median, and maximum value of the residuals for the initial pruned kernel and after training are \((\text{1.02e-6}, 0.42, 1.35)\) and \((\text{6.87e-7}, 0.41, 1.31)\), respectively}
    \label{fig:kse_spectrum_pruned}
\end{figure}

We then proceed to prune the kernel, as in \Cref{sec:exp2}. Looking at the outer parameters, we are left with the RBF kernel associated with \(w_2\) and the NNGP kernel. After removing the other kernels used in \(g_{\theta}\), we restart training from the same initialized internal parameters of the two remaining kernels, but run the training only for 30 epochs. After training the relevant parameters are now \(w_2=0.362\), \(w_6=0.738\), \(\sigma_2=9.3\), and \((b_1, b_2) = (1.4, 0)\). The emphasis of the outer parameters is also more towards the NNGP kernel; however, not by a substantial amount. Some kernels capture part of the same effect of the system, such as the linear kernel and the NNGP kernel, which share some similarities; therefore, one would expect a somewhat different relationship between the RBF kernel and the NNGP kernel after pruning. 

In \Cref{fig:kse_test_pruned}, we plot the same three test trajectories and observe a similar trend to what we observed after the training of the original six kernels, with some inaccuracies in the middle example improved. Also, while the starting kernel is much better after pruning, there appear to be more inaccuracies in the test trajectories for the pruned kernel with initial parameter values than the trained one before pruning. Ultimately, both pruning and training have a positive effect on the approximated trajectories. We also see how the training affects the eigenvalues in \Cref{fig:kse_spectrum_pruned}, where more of the eigenvalues have moved towards the unit circle, while a few eigenvalues remain outside the unit circle, which makes sense, given that the underlying system is chaotic. The residual values are quite similar, but the trained one has lower minimum, median, and maximum values, as shown in the figure description. Finally, while the Koopman approximation with the trained kernel captures trajectories well in general, there is some additional nuance. Roughly four or five out of the hundred training trajectories diverge when approaching \(t=0.5\). By chance, none were in the test set, so we included a selection of 10 training trajectories, of which 2 exhibit this diverging effect. The plot can be found in \Cref{appendix:test_traj}. If we then increase \(\Tilde N\) to 500, and retrain the pruned kernel with the bigger set \((\Tilde X, \Tilde Y)\) for a few epochs, we find that the issue disappears, that is, we appear to capture the dynamics more accurately. We have included the same training examples for the case with \(\Tilde N = 500\) in \Cref{appendix:test_traj}. We observe that if one needs a larger \(\Tilde N\), then training first on a smaller one for a long time, pruning the kernel, and then fine-tuning the kernel using the larger sets is a useful strategy to avoid costly computation. Also, one may consider how to sample \((\Tilde X, \Tilde Y)\) from the full dataset, and subsampling strategies may perhaps be useful to keep \(\Tilde N\) low while still capturing the dynamics.

From this experiment, we find that starting with a larger set of kernels, performing one training iteration, pruning, resetting the parameters of the remaining kernels, and then training again yields the best results. In addition, as long as one starts with a reasonable scheduler, having an unstable kernel at the start, in which we mean a kernel leading to eigenvalues of \(K_{sk}\) being far outside of the unit circle, does not present a problem for the algorithm.

\section{Conclusion and future work}\label{sec:conclusion}
In this paper, we streamline the search for good kernels for kEDMD via stochastic gradient-based optimization. To achieve this, we simplify the kEDMD method by applying EDMD to a dictionary of kernels that meet certain requirements, rather than via truncated SVD. We show that these two methods are equivalent, while the simplified version's connection to EDMD means that tools designed for EDMD can be applied directly. We can then connect the kernel learning to dictionary learning for EDMD, with additional extensions such as subsampling, batching, and a regularization scheduler. This results in an algorithm that does not require careful initialization of the kernel parameters, as the method works well for initial approximations having eigenvalues far outside of the unit circle. Through subsampling, the size of the Koopman approximation \(\Tilde N\) is quite low in all three experiments, which is important because it can often become the computational bottleneck. We observe that the method approximates the system dynamics while pruning unimportant kernels to find accurate and efficient kernel representations. We demonstrate that the method also performs well on more challenging problems.

Current limitations of the method we have observed so far are training on less smooth kernels, such as those from the Matérn family. The need to increase \(\Tilde N\) to capture the full dynamics in \Cref{sec:exp3} may be of concern. However, including more kernels and better subsampling strategies can perhaps alleviate this, along with first training on a smaller \(\Tilde N\) and then fine-tuning on the larger \(\Tilde N\). Finally, the number of hyperparameters is quite high, which means one might need substantial computational time to fine-tune them. Fortunately, the hyperparameters have been quite easy to specify without tuning, and defining the scheduler has been straightforward by looking at the training loss without one. 

In the future, we suggest to investigate the roles of the other loss functions proposed in this paper and how they can be combined to achieve even better results. We also want to consider incorporating resDMD into the training itself by computing the eigen-based losses using eigenpairs with less spectral pollution. The list of kernels we have used in this paper is also quite generic, and extending it to include more kernels and to incorporate non-parametric kernel flows is of interest.

\begin{ack}
E.B. and F.D. are thankful to the Deutsche Forschungsgemeinschaft (DFG, German Research Foundation) - for funding through project no. 468830823, and also acknowledge the association to DFG-SPP-229. H.O. acknowledges support from the DoD Vannevar Bush Faculty Fellowship Program under ONR award number N000142512035. B.H. acknowledges support from the  US Department of Energy. The work of I.G.K. was supported by the US Department of Energy.
The authors are grateful to Prof. Qianxiao Li and Prof. Matthew Colbrook for helpful discussions, and to Iryna Burak for several comments on the manuscript.
\end{ack}

\bibliographystyle{unsrtnat}
\bibliography{biblio}
\appendix
\section{Appendix}
\subsection{Kuramoto-Sivashinsky test and train trajectories}\label{appendix:test_traj}
In \Cref{fig:kse_test_full} we have added all 10 trajectories from the test set, as described in \Cref{sec:exp3}. Here we use the kernel pre-pruning.
\begin{figure}[ht]
    \centering
    \includegraphics[scale=0.20]{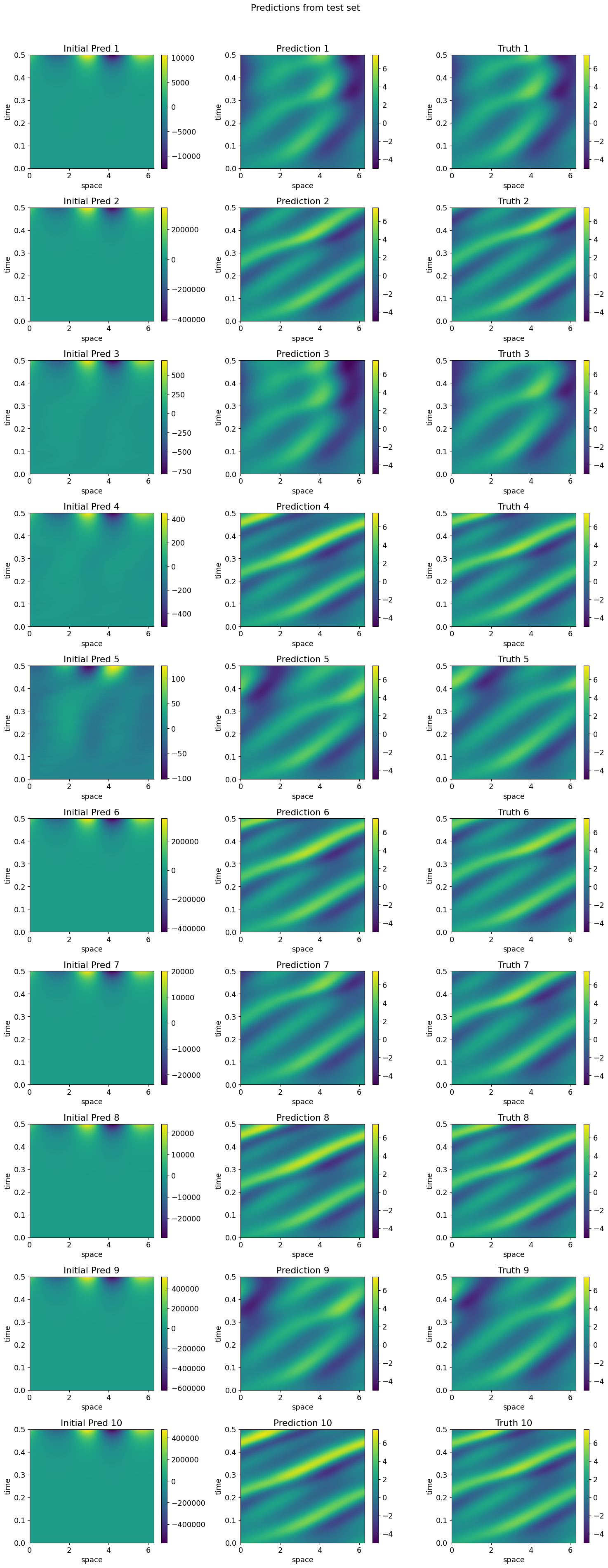}
    \caption{Approximating the trajectories of the Kuramoto-Sivashinsky PDE for all 10 trajectories in the test set. On the left is the approximated trajectories using the kernel with initialization parameters, in the middle is the approximated trajectories from kEDMD after dictionary learning, and on the right is the true trajectories.}
    \label{fig:kse_test_full}
\end{figure}
In addition, in \Cref{fig:kse_test_full_pruned} we include all 10 test trajectories after performing the pruning of the kernel, as described in \Cref{sec:exp3}.
\begin{figure}[ht]
    \centering
    \includegraphics[scale=0.20]{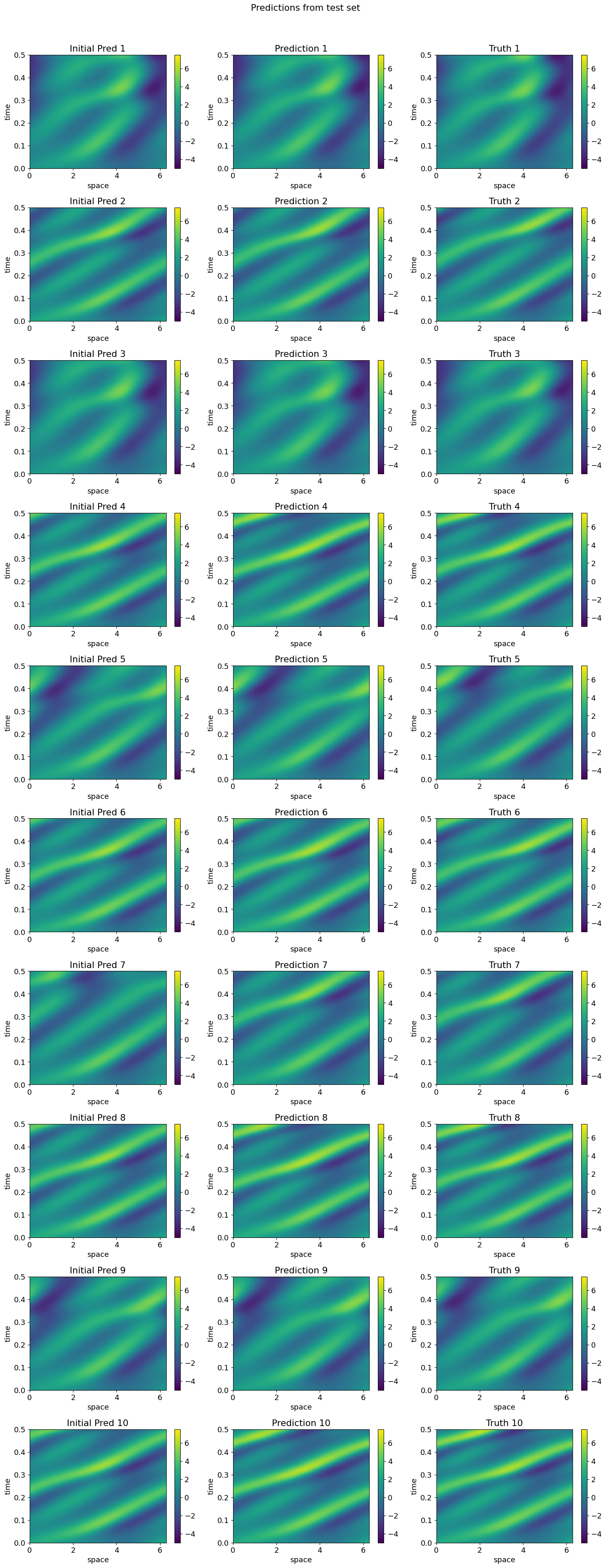}
    \caption{Approximating the trajectories of the Kuramoto-Sivashinsky PDE for all 10 trajectories in the test set after pruning the kernel. On the left is the approximated trajectories using the kernel with initialization parameters, in the middle is the approximated trajectories from kEDMD after dictionary learning, and on the right is the  true trajectories.}
    \label{fig:kse_test_full_pruned}
\end{figure}
Finally, in \Cref{fig:kse_train_full_pruned} we include 10 trajectories from the training set. The plot shows two of the five diverging trajectories found in the training set (plot number 1 and 10 from the top). No such diverging was found in the test set, hence we add 10 trajectories from the training set as well.
\begin{figure}[ht]
    \centering
    \includegraphics[scale=0.20]{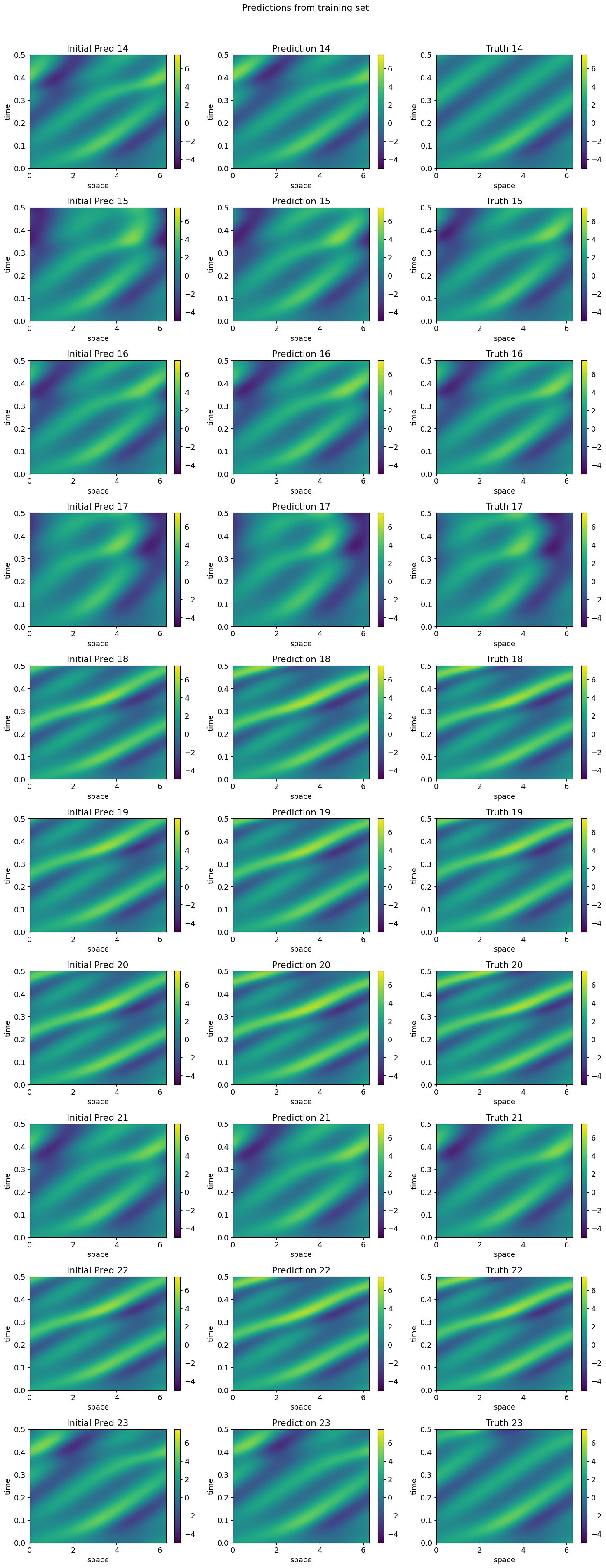}
    \caption{Approximating the trajectories of the Kuramoto-Sivashinsky PDE for 10 trajectories in the training set after pruning the kernel. On the left is the approximated trajectory using the kernel with initialization parameters, in the middle is the approximated trajectory from kEDMD after dictionary learning, and on the right is the true trajectory.}
    \label{fig:kse_train_full_pruned}
\end{figure}
When increasing \(\Tilde N\) to 500, retrain the pruned kernel on this larger set \((\Tilde X, \Tilde Y)\), we find that the issues disappear. See \Cref{fig:kse_train_full_pruned_large} for the same initial conditions as in \Cref{fig:kse_train_full_pruned}.
\begin{figure}[ht]
    \centering
    \includegraphics[scale=0.20]{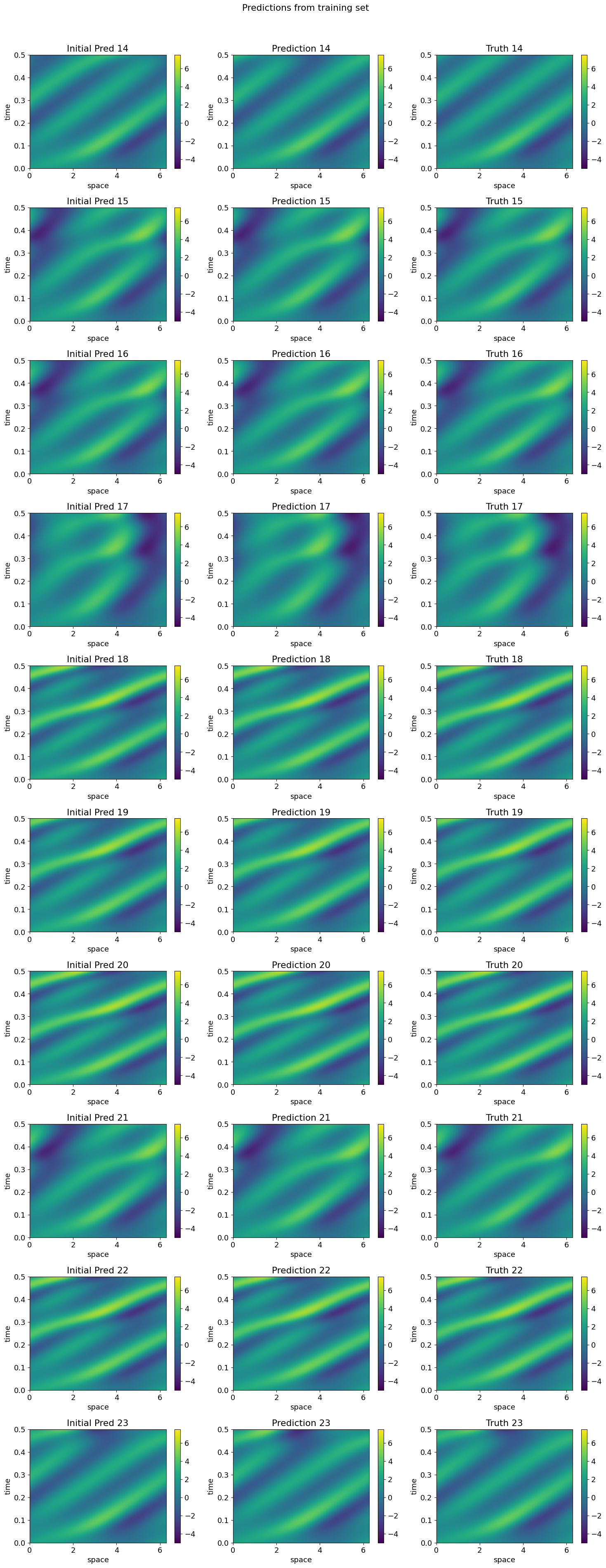}
    \caption{Approximating the trajectories of the Kuramoto-Sivashinsky PDE for 10 trajectories in the training set after pruning the kernel, for \(\Tilde N = 500\). On the left is the approximated trajectory using the kernel with initialization parameters, in the middle is the approximated trajectory from kEDMD after fine-tuning the pruned kernel on the bigger set \((\Tilde X, \Tilde Y)\), and on the right is the true trajectory.}
    \label{fig:kse_train_full_pruned_large}
\end{figure}

\end{document}